\makeatletter\@addtoreset{equation}{section}\makeatother
\DeclareSymbolFont{msbm}{U}{msb}{m}{n}
\DeclareMathSymbol{\N}{\mathalpha}{msbm}{'116}
\DeclareMathSymbol{\R}{\mathalpha}{msbm}{'122}
\renewcommand{\.}{\cdot}
\newcommand{\Lip}{\text{Lip}}
\renewcommand{\P}{\mathcal{P}}
\newcommand{\K}{\mathcal{K}}
\newcommand{\C}{\mathcal{C}}
\newcommand{\T}{\mathcal{T}}
\newcommand{\E}{\mathcal{E}}
\newcommand{\B}{\mathcal{B}}
\newcommand{\D}{\mathcal{D}}
\newcommand{\bm}{}
\newtheorem{remark}{Remark}
\newcommand{\np}{_{n+1}}
\newcommand{\hnp}{^{n+1}}
\newcommand{\nph}{_{n+\frac{1}{2}}}
\newcommand{\dt}{\Delta t}
\newcommand{\dx}{\Delta x}
\newcommand{\M}{\mathcal{M}}
\renewcommand{\S}{\mathbb{S}}
\newcommand{\dist}{W_1}
\begin{document}

\bibliographystyle{plain}

\title{A retarded mean-field approach for interacting fiber structures}

\author{R. Borsche\footnotemark[1] \and A. Klar\footnotemark[1]
  \footnotemark[2] \and  C. Nessler\footnotemark[1] \and A. Roth\footnotemark[1]  \and O. Tse\footnotemark[1]}
\footnotetext[1]{Fachbereich Mathematik, Technische Universit\"at
  Kaiserslautern, Germany \\ 
  \{borsche@mathematik.uni-kl.de, klar@mathematik.uni-kl.de, nessler@mathematik.uni-kl.de, roth@mathematik.uni-kl.de, tse@mathematik.uni-kl.de\}}
\footnotetext[2]{Fraunhofer ITWM, Kaiserslautern, Germany \\
  \{klar@itwm.fhg.de, wegener@itwm.fhg.de\}}

\maketitle



\begin{abstract}
We consider an interacting system of one-dimensional structures modelling fibers with fiber-fiber interaction in a fiber lay-down process. The resulting microscopic system is investigated by looking at different asymptotic limits of the corresponding stochastic model. Equations arising from mean-field and diffusion limits are considered. Furthermore, numerical methods for the stochastic system and its mean-field counterpart are discussed. A numerical comparison of solutions corresponding to the different scales (microscopic, mesoscopic and macroscopic) is included.
\end{abstract}

{\bf Keywords} interacting stochastic particles, fibers, mean-field equations, retarded potential, delay equations.

{\bf 2010 AMS Subject Classification:} 92D50, 35B40, 82C22, 92C15

\section{Introduction}\label{section1}

One-dimensional structures appear in various context of industrial applications. They are used, for example, in the modelling of polymers in suspensions, composite materials, nanostructures, fiber dynamics in turbulent flows and, in particular, fiber lay-down in technical processes of non-woven materials. Furthermore, such structures have been modelled on different levels of description involving different scales. Besides microscopic models, mesoscopic kinetic or Fokker--Planck equations have been widely used for a statistical description of the fiber or polymer distributions. We refer to \cite{BS07,BS11} and \cite{GKMW07,KMW12} for concrete examples in the industry. In this article, we consider non-woven materials, which are webs of long flexible fibers. Production processes and models corresponding to the lay-down of such fibers have been intensively investigated. See \cite{BGKMW07} and the above cited references.

In the above mentioned investigations, the one-dimensional structures (fibers) under consideration were assumed to be mutually independent, which clearly does not represent reality. Therefore, the present work aims at including fiber-fiber interaction, thereby describing the size of each fiber and, simultaneously, the absence of intersection among fibers. This is achieved by simply including the interaction of structures into a well investigated model described in non-woven production processes. Taking into account the interaction of the structures on the microscopic level leads to coupled systems of stochastic differential equations. Its statistical description should also take into account the interactions and will consequently no longer be based on the classical Fokker--Planck model.

The new model makes use of a microscopic systems of retarded stochastic differential equations, and its mesoscopic description is obtained via formal mean-field procedures. The mean-field limit is described by a McKean--Vlasov type equation with a delay term. We perform an analytical investigation of the mean field limit, as well as a numerical comparison of microscopic, mean field and macroscopic equations. The analysis of the limit is based on the work in \cite{BH,dobru,G03,G12,GMR14,neunzert,spohn2}. For numerical methods for mean-field type equations we refer to \cite{BS11,Sem3,RKSZ14,Sem6}.

 The paper is organized as follows: starting from a model for independent fibers, we present in Section~\ref{interacting} a new model for interacting fibers, which takes into account the finite size of the fibers and prevents intersection of the fibers. The model is based on a system of retarded stochastic differential equations with suitable interaction potentials. Section~\ref{meanfieldsection} describes the mean-field equation and a discussion of its stationary solutions. The core of this section is devoted to the proof of the mean-field limit for the corresponding deterministic system, i.e., the rigorous derivation of the mean-field equation from the system of retarded deterministic equations. Section~\ref{diffusionsection} contains the diffusion limit of the kinetic equation, while Section~\ref{numerics} contains a description of the numerical methods used for the microscopic and mean-field equations. The numerical results include an investigation and comparison of stationary states, as well as a convergence analysis of solutions to equilibrium for both the microscopic and mean-field equations. We finally conclude in Section~\ref{conclusion}.


\section{Interacting isotropic fiber models}
\label{interacting}

We begin by reviewing a basic, mutually independent fiber model for the lay-down process of fibers, described by a stochastic dynamical system in dimensions $d \geq 2$ (cf.~\cite{BGKMW07,KMW12,KST14}). Using the state space $\M:=\R^d\times \S^{d-1}$, where $\S^{d-1}$ denotes the unit sphere in $\R^d$, a fiber is given by a path of the following stochastic differential equation:
\begin{align}\label{2dmodelcoordfree}
 \begin{aligned}
  d x_t &=\tau_t\,dt,\\
  d\tau_t &= \big(I - \tau_t \otimes  \tau_t\big) \circ \Big(- \frac{1}{d-1} \nabla_x V(x_t)\,dt  + A\,d W_t\Big)
 \end{aligned}
\end{align}
with initial condition $(x_0,\tau_0) \in \M$. Here, $V\colon\R^d \rightarrow \R$ is the so-called coiling potential, $A$ a nonnegative diffusion coefficient and $W_t$ the standard Brownian motion. We use a coordinate free formulation, where $\circ$ denotes the Stratonovich integral. Note also that $(I-\tau\otimes\tau)$ is the projector of $\R^d$ onto the unit sphere $\S^{d-1}$. We refer to \cite{DM1} for related work. We equip our state space with the measure $dx\,d\nu$, where $d x$ denotes the Lebesgue measure on $\R^d$ and $d \nu$ the normalized surface measure on $\S^{d-1}$.
Note that the dimensional scaling $1/(d-1)$ is introduced for convenience, in order to achieve a stationary state which does not explicitly depend on the spatial dimension.

The fiber model above only describes the evolution of the center line of the fiber and does not capture the effects of the finite size of the fibers. Hence, self-intersection and intersection among fibers are not prevented in the model. A possibility to remedy this deficiency it to include hysteresis into the system, which enables the system to prevent self-intersection, and for multiple fibers, intersection among them. Considerations along this line of thought lead to the following interacting fiber model. 

Consider $N \in \N$ fibers with position and velocity $(x^i,\tau^i) \in \M$, for each $i \in \{1,\dots, N \}$. The interacting fiber model reads
%
\begin{align}\label{2dmodelcoordfreeinter}
\begin{aligned}
 d x_t^i&=\tau_t^i\,dt \\
 d\tau_t^i&= \big(I - \tau_t^i \otimes  \tau_t^i\big) \circ \bigg( - \frac{1}{d-1} \nabla_{x} V( x_t^i )\,dt \\
    &\hspace{7em} - \frac{1}{d-1} \bigg[\frac{1}{N} \sum_{ j=1}^N \frac{1}{t} \int_0^t \nabla_{x}U( x_t^i- x_s^j)\,ds \bigg]dt
   + A \, d W^i_{t} \bigg),
\end{aligned} 
\end{align}
with independent Brownian motions $W_t^i$. In comparison to the previous fiber model, we include a scaled, nonlocal (in time) interaction term 
\[
  \frac{1}{t} \int_0^t \nabla_{x}U( x_t^i- x_s^j)\,ds,
\] 
where $U$ is an interaction potential, which is repulsive in our case, so as to avoid any contact among the fibers. Note that a 'non-retarded' version of this system would be  similar to a model for swarming with roosting potential (cf.~\cite{CKMT,CDP}). In addition to the interaction term, our new model includes a delay term, i.e., an integration with respect to time, to describe the fact that fibers interact with each other and with itself on the whole fiber length. The factor $1/N$ leads to the so called   'weak coupling scaling' (cf.~\cite{BH,neunzert}).  Similiarly, the scaling $1/t$ is a normalization of the potential with respect to the time integration. Also note that the summation does not exclude $i = j$, which accounts for self-interaction of a fiber with itself.
  
There is some freedom in the choice of the interaction potential. As a simple example, one can use a mollifier type potential
\[
 U(x)=U(| x |) = C \exp\Big(-\frac{(2R)^2}{(2R)^2- | x |^2}\Big),\quad\text{for}\ | x | < 2R ,
\]
where $R$ is a nonnegative parameter representing the fiber radius and $C>0$ is a fixed constant describing the strength of the interaction. Alternatively, a potential could be described by a smoothed version of Heaviside type potential
\[ 
 U(x)=U(| x |) = C \Theta(2R-| x | ),
\]
with Heaviside function $\Theta$. A smooth version of such a potential may be given by 
\begin{eqnarray}\label{Uinteraction3}
U(x) = \frac{C}{1+ \exp\Big(-k\left( 1 - \frac{| x |^2}{(2R)^2}\right)\Big)},
\end{eqnarray}
for some regularizing parameter $k>0$.

\begin{figure}[h]
\centering
\hspace*{1em}\includegraphics[trim=15.0cm 1.0cm 11cm 0cm, clip, width=0.4\textwidth]{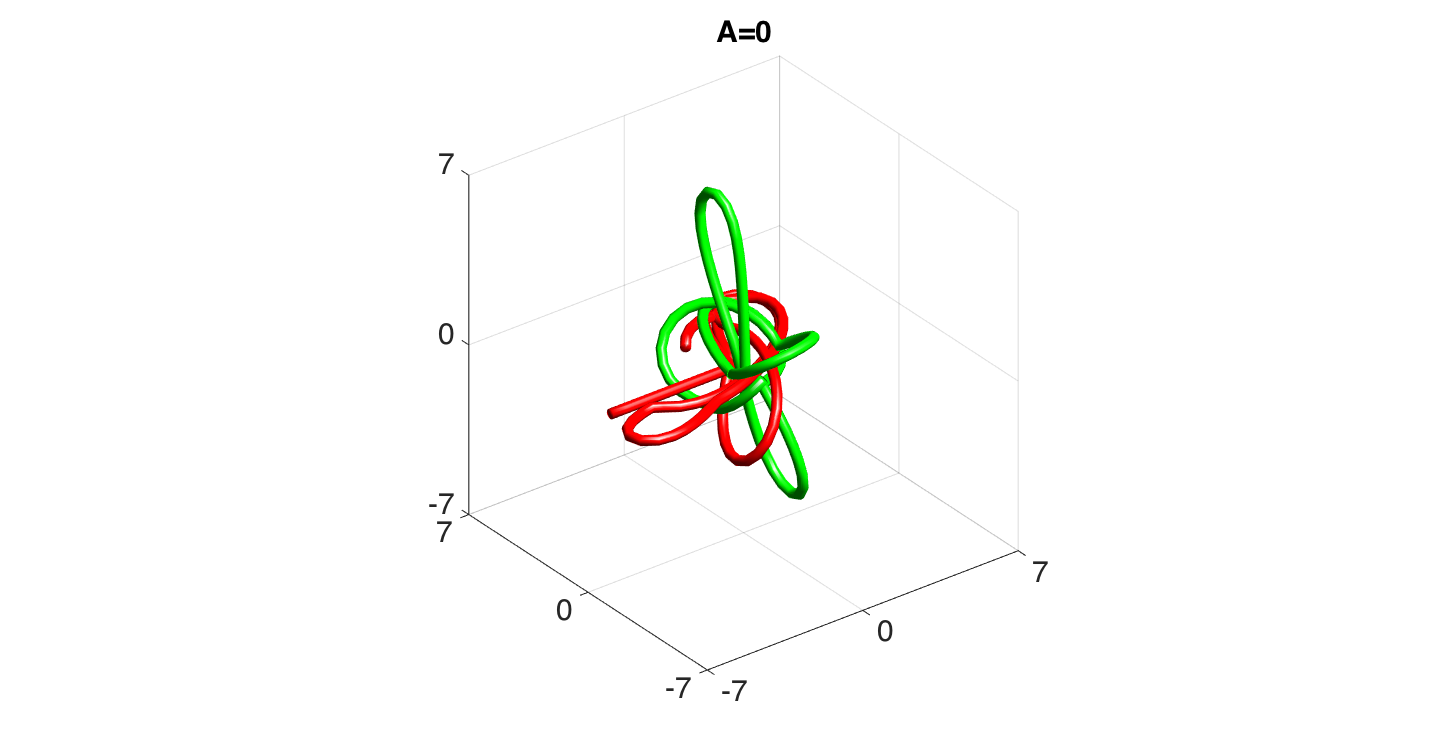}
\hspace*{1.5em}\includegraphics[trim=15.0cm 1.0cm 11cm 0cm, clip, width=0.4\textwidth]{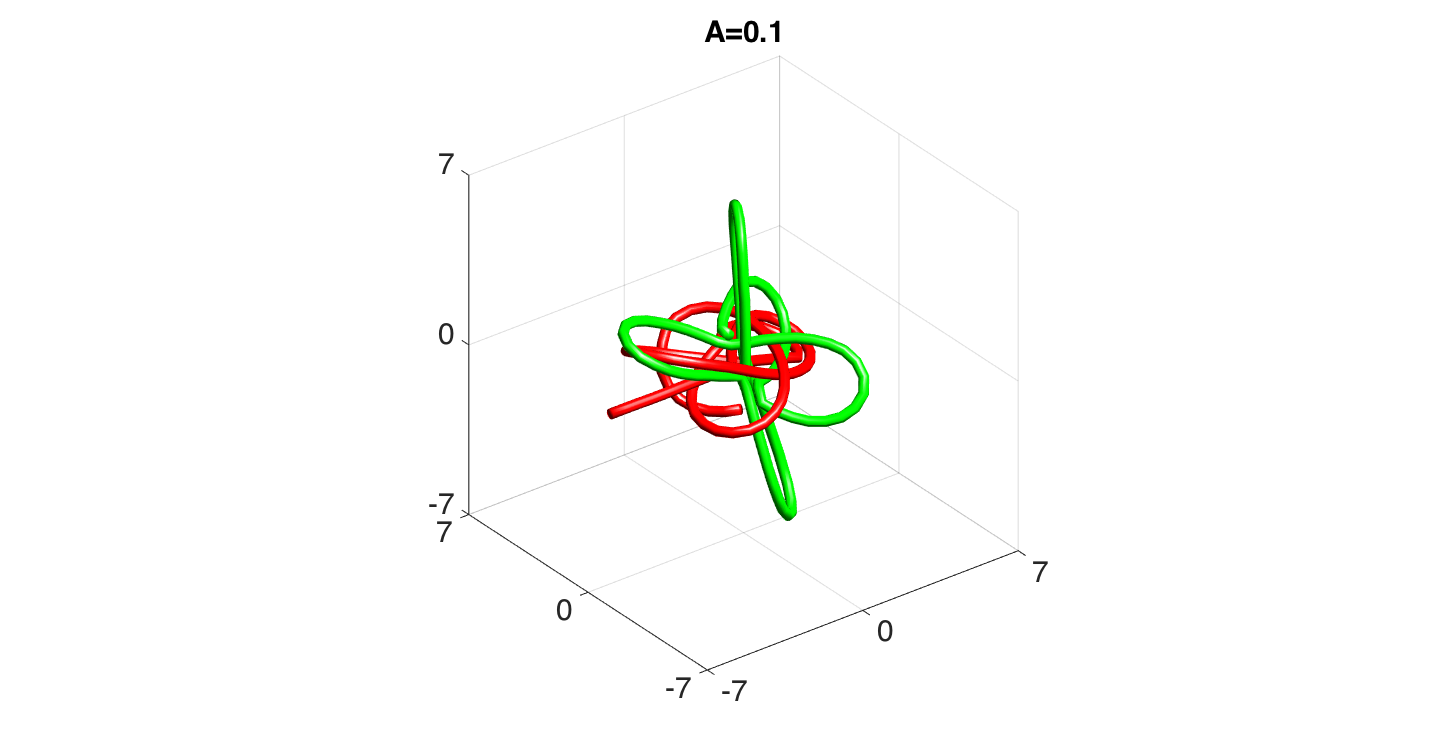}\\[1em]
\hspace*{1em}\includegraphics[trim=15.0cm 1.0cm 11cm 0cm, clip, width=0.4\textwidth]{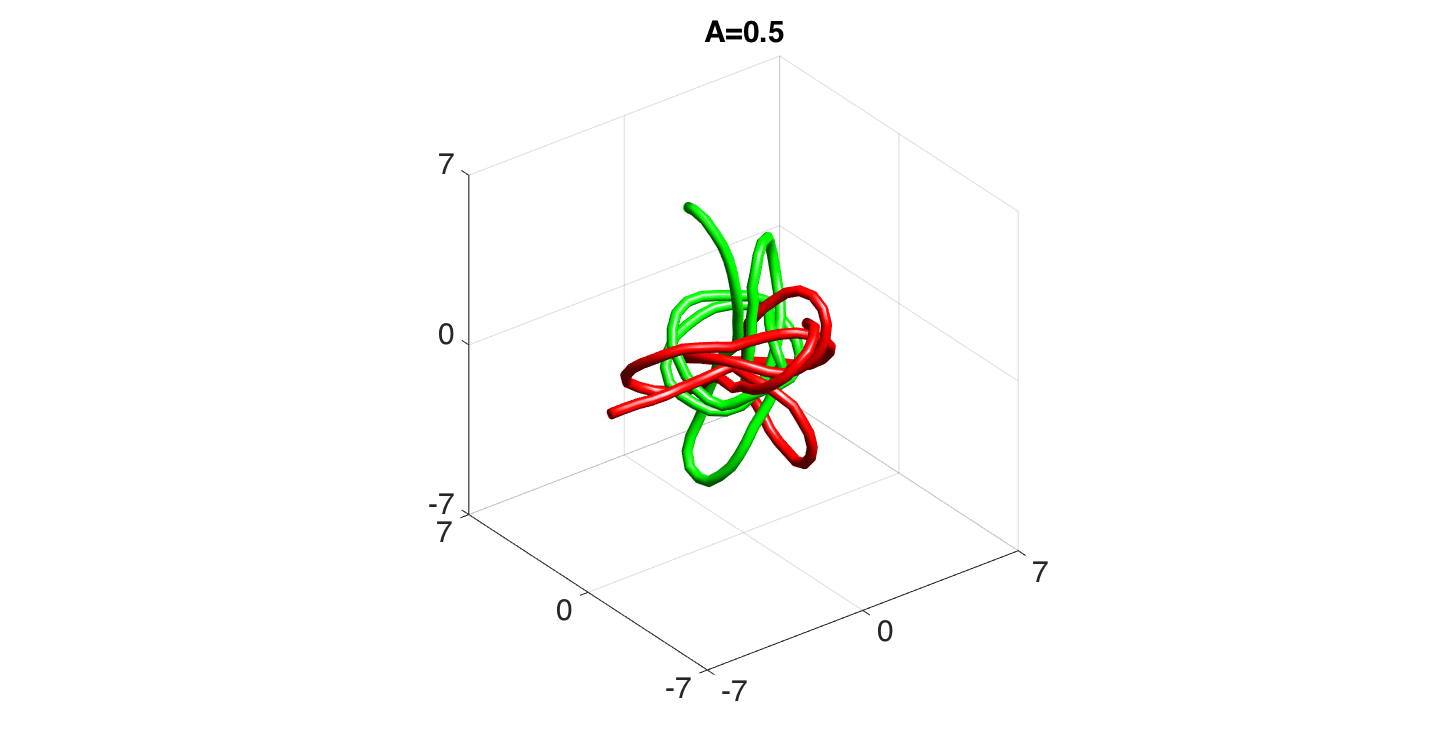}
\hspace*{1.5em}\includegraphics[trim=15.0cm 1.0cm 11cm 0cm, clip, width=0.4\textwidth]{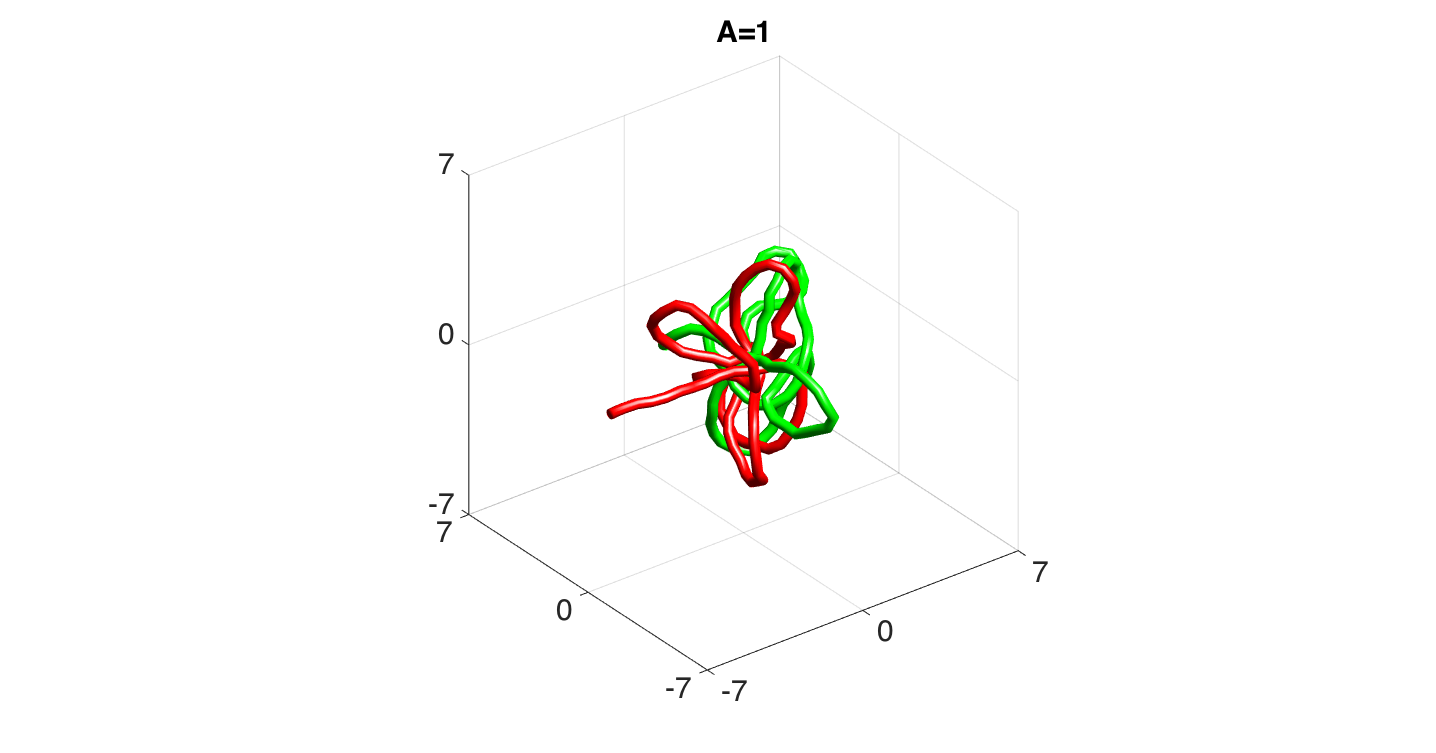}
\caption{Comparison of the influence of the noise $A$ for two interacting fibers.}
\label{compNoise}
\end{figure}  

In Figure~\ref{compNoise} we compare the fiber curves for different noise amplitudes $A$. We use the coiling potential $V(x) = \frac{1}{2}| x|^2$ and the interaction potential $U$ from (\ref{Uinteraction3}), with $k=100,\ C = 10$ and $R=0.4$. Since the computation was done for a short amount of time, we neglect the scaling $1/t$ in front of the interaction part. We observe that the non-intersecting fiber curves are increasingly altered with increasing noise amplitude.

\begin{remark}
\begin{enumerate}
 \item For $U \equiv 0$, one obtains a fully decoupled system for $(x^i, \tau^i)$, and each fiber is described by the mutually independent model given in (\ref{2dmodelcoordfree}).
 
 \item To consider inelastic  interactions, one could include damping terms depending on the velocity in the interaction potential. This would lead to equations 
 where a dissipative force is included in the interaction term.
 
 \item As in the case without interaction, it is also possible to formulate a smooth version of the interacting fiber system. The basic idea is to replace the Brownian motion on the sphere by an Ornstein--Uhlenbeck process (cf.~\cite{KMW12b}). 
\end{enumerate}
\end{remark}

\begin{remark}
It is also possible to include reference curves $\gamma$ into the model, which describes, for example, the motion of a conveyor belt. This can be done in the following way. We denote with $\eta^i: \R_+ \rightarrow \R^d$ the actual fiber curves. Then (\ref{2dmodelcoordfreeinter}) changes to
\begin{align}\label{modelreference}
\begin{aligned}
 d \eta_t^i&=\tau_t^i\,dt \\
 d\tau_t^i&= \big(I - \tau_t^i \otimes  \tau_t^i\big) \circ \bigg( - \frac{1}{d-1} \nabla_{x} V( \eta_t^i -\gamma)\,dt \\
    &\hspace{7em} - \frac{1}{d-1} \bigg[\frac{1}{N} \sum_{ j=1}^N \frac{1}{t} \int_0^t \nabla_{x}U( \eta_t^i- \eta_s^j)\,ds \bigg]dt
   + A \, d W^i_{t} \bigg).
\end{aligned}
\end{align}
 A change of variables $\xi_i:= \eta_t^i - \gamma$ describes the deviation of the fiber curves from the reference curve and thus, (\ref{modelreference}) may be written as
\begin{align}\label{modelreference2}
\begin{aligned}
 d \xi_t^i&=\tau_t^i\,dt - d\gamma_t\\
 d\tau_t^i&= \big(I - \tau_t^i \otimes  \tau_t^i\big) \circ \bigg( - \frac{1}{d-1} \nabla_{x} V( \xi_t^i)\,dt \\
    &\hspace{2em} - \frac{1}{d-1} \bigg[\frac{1}{N} \sum_{ j=1}^N \frac{1}{t} \int_0^t \nabla_{x}U( \xi_t^i- \xi_s^j + \gamma_t - \gamma_s)\,ds \bigg]dt
   + A \, d W^i_{t} \bigg).
\end{aligned}
\end{align}
In this case the force due to the interaction potential depends on the relative fiber point position as well as on the difference $\gamma_t - \gamma_s$ in the reference curve.

In the relevant case of non-wovens on a conveyor belt moving with constant speed, we consider $d\in\{2,3\}$ and a reference curve given by $\gamma_t = - v_{ref} e_1 t$. Here $v_{ref} = v_{belt}/ v_{prod}$ is the ratio between the speed of the conveyor belt, $v_{belt}$, and the speed of the production process, $v_{prod}$, and $e_1$ denotes the direction of belt movement (cf.~\cite{KMW12}).
\end{remark}

In the following, we formulate a slightly more general model than (\ref{2dmodelcoordfreeinter}). Since, in reality, the fiber material is transported away by the belt, interaction will not take place for the full history of the fibers. Thus, it is reasonable to consider a cut-off with a cut-off size $H>0$. Define 
\[
 h(t) = \begin{cases} t & \text{for }t\le H \\ H & \text{for }t>H\end{cases},\quad H\in(0,\infty).
\] 
Then the interacting fiber model with cut-off is given by
\begin{align}\label{modelshortdelay}
\begin{aligned}
 d x_t^i&=\tau_t^i\,dt \\
 d\tau_t^i&= \big(I - \tau_t^i \otimes  \tau_t^i\big) \circ \bigg( - \frac{1}{d-1} \nabla_{x} V( x_t^i )\,dt \\
    &\hspace{5em} - \frac{1}{d-1} \bigg[\frac{1}{N} \sum_{ j=1}^N \frac{1}{h(t)} \int_{t-h(t)}^t \nabla_{x}U( x_t^i- x_s^j)\,ds \bigg]dt
   + A \, d W^i_{t} \bigg).
\end{aligned} 
\end{align}
In the limit $H\to 0$, we obtain a non-retarded interacting particle model with constant speed, whereas, in the limit $H\to\infty$, we obtain the interacting fiber model in (\ref{2dmodelcoordfreeinter}).


\section{Mean-field equation}
\label{meanfieldsection}
The associated mean-field equation for the distribution function $f = f(t,x,\tau)$ may be formally derived from the microscopic equations following
the procedure described, for example, in \cite{BCC,GMR14}. The equation reads
\begin{align}\label{pderot}
  \partial_t f  +  \tau \. \nabla_{x} f + S  f =  L  f
\end{align}
with deterministic force term $S f = S^V\!f + S^U\!f$, given by
\begin{align}\label{Srot}
 \begin{aligned}
  S^V\! f &= - \nabla_{\tau} \. \left(f\left(I- \tau \otimes \tau\right) \frac{1}{d-1} \nabla_x V\right),\\
  S^U\!f &= - \nabla_{\tau} \. \left(f\left(I- \tau \otimes \tau\right) \frac{1}{d-1} \frac{1}{h(t)}\int_{t-h(t)}^t \int_{\R^d} \nabla_x U(x-y)  \rho (s,y)\,dy\,ds\right),
 \end{aligned}
\end{align}
 where $\nabla_{\tau}$ is the gradient on  $\mathbb{S}^{n-1}$, and diffusion operator
\begin{align}\label{Lrot}
   L f =\frac {A^2}{2} \Delta_\tau f,
\end{align}
where $\Delta_\tau $ denotes the Laplace--Beltrami operator on $\S^{d-1}$. In addition, we have the normalization $\int_{\R^d} \rho\,dx =1$, where $\rho$ denotes the zeroth-moment $\rho = \int_{\S^{d-1}} f\, d \nu$.


\begin{remark}
A  stationary solution of the mean-field equation (\ref{pderot}) 
is given by the time-independent solution of
\[
  \tau \. \nabla_{x} f +  Sf = \frac {A^2}{2} \Delta_\tau f.
\]
Looking for a solution independent of $\tau$, we get
\[
 \rho\, \nabla_x\Big( \ln\rho + V + U\star\rho\Big) = 0.   
\]
This leads to the integral equation
\begin{align}\label{integral}
\ln \rho   + V + U \star \rho  = c,
\end{align}
where the constant $c$ is fixed by the normalization $\int_{\R^d}\rho\,dx=1$. The integral equation may be written in the equivalent fixed-point form
\begin{align}\label{eq:fixedpoint}
   \rho = \frac{e^{-V - U \star \rho}}{\int_{\R^d} e^{-V - U \star \rho}\, dx}.
\end{align}
On the other hand, the stationary solution may also be characterised as the (unique) minimizer of the 'energy functional'
\[
 \mathcal{F}(\rho) := \int_{\R^d} (\ln \rho - 1)\rho\,dx + \int_{\R^d} \big(V + U\star \rho \big)\rho \,dx,
\]
where the first term describes the internal energy (entropy), and the second describes the potential energy. In contrast  to the case without interaction, the stationary solution has to be determined numerically.
\end{remark}

The rest of this section is devoted to a rigorous proof of the mean-field limit for the deterministic interacting fiber model 
\begin{align}\label{eq:deterministic}
\begin{aligned}
 \frac{d x_t^i}{dt}&=\tau_t^i\\
 \frac{d\tau_t^i}{dt}&= \big(I - \tau_t^i \otimes  \tau_t^i\big) \circ \bigg( - \frac{1}{d-1} \nabla_{x} V( x_t^i ) \\
    &\hspace{10em} - \frac{1}{d-1} \frac{1}{N} \sum_{ j=1}^N \frac{1}{h(t)} \int_{t-h(t)}^t \nabla_{x}U( x_t^i- x_s^j)\,ds \bigg),
\end{aligned} 
\end{align}
towards the Vlasov type equation
\begin{align}\label{eq:fibervlasov}
  \partial_t f  + \tau \. \nabla_{x} f + S  f =  0,
\end{align}
where $S=S^V + S^U$ is as given in (\ref{Srot}).

\subsection{Mean-field limit of a retarded system}
We consider a system of $N\in\N$ interacting particles with state $Z^i_t\in\R^m$, $m\in\N$ at time $t\in\R_+$
\begin{equation}\label{eq:general_ode}
 \frac{d Z^i_t}{d{t}} = a(Z^i_t) + \frac{1}{N}\sum_{j=1}^N \frac{1}{h(t)}\int_{t-h(t)}^t B(Z^i_t,Z^j_s)\,d{s},\quad Z^i_0=z^i\in\R^m
\end{equation}
for each $i\in\{1,\ldots,N\}$, where as above
\[
 h(t) = \begin{cases} t & \text{for }t\le H \\ H & \text{for }t>H\end{cases},\quad H\in(0,\infty),\quad\text{or}\quad h(t)=t,
\]
and $a\in\Lip(\R^m)$, $B\in\Lip_b(\R^m\times \R^m;\R^m)$ be globally Lipschitz, satisfying
\begin{align*}
 \left\lbrace 
 \begin{aligned}
  \sup_{\hat z\in\R^m} |B(z_1,\hat{z}) - B(z_2,\hat{z})| & \le \Lip(B)\,1\wedge|z_1-z_2|,\\
  \sup_{z\in\R^m} |B(z,\hat{z}_1) - B(z,\hat{z}_2)| & \le \Lip(B)\,1\wedge |\hat z_1- \hat z_2|,
 \end{aligned}\right.
\end{align*}
where we use the notation $x\wedge y = \min\{x,y\}$.
As in (\ref{2dmodelcoordfreeinter}) and (\ref{modelshortdelay}), (\ref{eq:general_ode}) is a retarded system of ordinary differential equations. 

Denote by $\P_r(\R^m)$, $r>0$, the set of Borel probability measures $\mu$ such that
\[
 \int_{\R^m} |z|^r\,\mu(dz) < \infty.
\]
For every probability measure $\mu\in\P_1(\R^m)$, we set
\begin{equation}\label{eq:Kmu}
 \K\mu(z) := \mu(B(z,\.))=\int_{\R^m}B(z,\hat{z})\,\mu(d\hat{z}),\quad z\in\R^m.
\end{equation}
Taking the empirical measure $\mu^N_s = \frac{1}{N}\sum_{j=1}^N \delta(\.-Z^j_s)\in \P_1(\R^m)$ as $\mu$ in (\ref{eq:Kmu}) gives
\[
 \K{\mu^N_s}(Z^i_t) = \int_{\R^m}B(Z^i_t,\hat{x})\,\mu_s^N(d{\hat{x}}) = \frac{1}{N}\sum_{j=1}^N B(Z^i_t,Z^j_s).
\]
Therefore, we may consider a mean-field equation given by
\begin{align}\label{eq:meanfieldode} 
  \frac{d Z_t}{dt}(z) = F(t,Z_t(z),\{\mu\}_t):=a(Z_t(z)) +  \frac{1}{h(t)}\int_{t-h(t)}^t \K{\mu_s}(Z_t(z))\,d{s},
\end{align}
with initial condition $Z_0(z)=z$, $\text{law\,}(Z_0)=\mu_0$. Here $\mu_t = Z_t(\.)_\# \mu_0$ denotes the push forward of the measure $\mu_0\in \P_1(\R^m)$, i.e., 
\[
 \int_{\R^m} \varphi(z)\,\mu_t(dz) = \int_{\R^m} \varphi(Z_t(z))\,\mu_0(dz)\qquad\forall\varphi\in\C_b(\R^m),
\]
under the flow $Z\in\C(\R_+\times\R^m;\R^m)$ generated by the mean-field equation, and $\{\mu\}_t$ denotes the family of measures $\{\mu_s,s\in[0,t]\}$ up to time $t>0$.

We begin with an existence and uniqueness result for solutions of (\ref{eq:meanfieldode}). Our results generalizes those given in \cite{G12}, which follows the general scheme introduced in \cite{dobru} (cf.~\cite{BH,neunzert}). The proof relies on a slight modification of the proof in \cite[Proposition~4.1]{G12}. For the sake of completeness, we include the proof in Appendix~A.

\begin{proposition}\label{prop:exists}
 Let the assumptions on $a$ and $B$ above be satisfied. Then the mean-field equation (\ref{eq:meanfieldode}) admits a unique global solution $Z\in\C(\R_+\times\R^m;\R^m)$.
\end{proposition}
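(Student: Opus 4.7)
The plan is to prove existence and uniqueness by a Banach fixed-point argument on the space of continuous curves in $\P_1(\R^m)$ equipped with the Wasserstein-1 distance, modified to accommodate the retardation. Fix an arbitrary time horizon $T>0$ and let $X_T := \C([0,T];\P_1(\R^m))$ be endowed with the metric $\dist_T(\nu,\tilde\nu) := \sup_{t\in[0,T]} \dist(\nu_t,\tilde\nu_t)$ (or, more convenient for the contraction, its $e^{-\lambda t}$-weighted variant). For a given curve $\nu\in X_T$, one first solves the non-autonomous but otherwise classical ODE
\[
 \tfrac{d}{dt}Z^\nu_t(z) = a(Z^\nu_t(z)) + \tfrac{1}{h(t)}\int_{t-h(t)}^t \K\nu_s(Z^\nu_t(z))\,ds,\qquad Z^\nu_0(z)=z,
\]
whose right-hand side is Lipschitz in $Z^\nu_t(z)$ uniformly in $t$ (since $a\in\Lip(\R^m)$ and, by the assumed bounds on $B$, $\K\nu_s$ is Lipschitz with constant $\Lip(B)$ uniformly in $s$ and $\nu_s$). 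Classical Cauchy--Lipschitz therefore provides a unique flow $Z^\nu\in\C(\R_+\times\R^m;\R^m)$. Set $\Phi(\nu)_t := (Z^\nu_t)_\#\mu_0$; one checks continuity of $t\mapsto\Phi(\nu)_t$ in $\dist$ from boundedness of the drift, so $\Phi$ maps $X_T$ into itself.

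The heart of the proof is the contraction estimate. Given $\nu,\tilde\nu\in X_T$, use the dual representation
\[
 \dist\big(\Phi(\nu)_t,\Phi(\tilde\nu)_t\big) \le \int_{\R^m} |Z^\nu_t(z)-Z^{\tilde\nu}_t(z)|\,\mu_0(dz),
\]
which reduces the problem to controlling the pathwise distance of the two flows. Writing the integral equation for the difference and splitting
\[
 \big|\K\nu_s(Z^\nu_r) - \K\tilde\nu_s(Z^{\tilde\nu}_r)\big| \le \Lip(B)\,\big(|Z^\nu_r-Z^{\tilde\nu}_r| + \dist(\nu_s,\tilde\nu_s)\big),
\]
where the second inequality uses the Kantorovich--Rubinstein duality together with the uniform Lipschitz bound of $\hat z\mapsto B(z,\hat z)$, yields
\[
 |Z^\nu_t(z)-Z^{\tilde\nu}_t(z)| \le \int_0^t \big(\Lip(a)+\Lip(B)\big)|Z^\nu_r-Z^{\tilde\nu}_r|\,dr + \Lip(B)\int_0^t \tfrac{1}{h(r)}\int_{r-h(r)}^r \dist(\nu_s,\tilde\nu_s)\,ds\,dr.
\]
The crucial observation is that the normalization $1/h(r)$ makes the inner average trivially bounded by $\sup_{s\le r}\dist(\nu_s,\tilde\nu_s)$, so a standard Gronwall argument gives
\[
 \sup_{z\in\R^m} |Z^\nu_t(z)-Z^{\tilde\nu}_t(z)| \le C_T\int_0^t \dist(\nu_s,\tilde\nu_s)\,ds
\]
with $C_T:=\Lip(B)\,e^{(\Lip(a)+\Lip(B))T}$. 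Integration and using a weight $e^{-\lambda t}$ for sufficiently large $\lambda>0$ turns $\Phi$ into a strict contraction on $X_T$, so Banach's theorem gives a unique fixed point; its flow $Z=Z^\nu$ is the claimed solution on $[0,T]$, and arbitrariness of $T$ yields the global statement.

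The main obstacle is exactly the retardation: the double time integral introduced by the delay could, in principle, be incompatible with a contraction estimate, and the factor $1/t$ in $h(t)$ near $t=0$ looks singular. Both issues dissolve once one recognizes that $h$ is chosen precisely so that $\frac{1}{h(r)}\int_{r-h(r)}^r (\,\cdot\,)\,ds$ is an \emph{average}, hence never amplifies the distance of the driving curves and remains bounded uniformly on compact time intervals. Beyond that, everything is a controlled modification of the Dobrushin--Neunzert--Golse scheme; the boundedness assumption on $B$ (phrased through the $1\wedge$-bound) ensures no growth issues at infinity and allows the same estimates to carry over to subsequent time intervals, so the local solution extends to all of $\R_+$ without further work.
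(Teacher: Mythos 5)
Your argument is essentially correct, but it takes a genuinely different route from the paper's proof, and the comparison is instructive. The paper sets up the fixed point directly on the space of flows $\B_T=\C([0,T]\times\R^m;\R^m)$ with the sup-norm, writes the integral equation $Z=\T Z$ where the measures $\mu_\sigma = Z_\sigma{}_\#\mu_0$ are slaved to the flow, and then \emph{does not} establish a one-step contraction: instead it integrates $\int_0^t\frac{1}{s}\int_0^s\E(\sigma)\,d\sigma\,ds$ by parts to expose the logarithmic kernel $g(t,s)=1+\ln t-\ln s$, iterates the estimate $k$ times to get $\E_k(t)\le\Lip(F)^k\frac{(k+1)t^k}{k!}\sup\E_0$, and concludes by summability of the Picard series. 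Your proposal instead sets the fixed point on the space of measure curves $\C([0,T];\P_1(\R^m))$: freeze $\nu$, solve the auxiliary non-autonomous ODE by classical Cauchy--Lipschitz, push $\mu_0$ forward, and obtain a one-step contraction on an $e^{-\lambda t}$-weighted Wasserstein metric. The structural observation that carries this through is yours alone and it is a real simplification: since $h(r)\le r$, the retarded term $\frac{1}{h(r)}\int_{r-h(r)}^r(\cdot)\,ds$ is an \emph{average} over a subinterval of $[0,r]$, hence bounded by the running supremum of the integrand, so the apparent $1/t$ singularity near $t=0$ disappears immediately and an ordinary Gronwall/weighted-norm argument applies; no integration by parts and no logarithmic kernel are needed. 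What the paper buys with its explicit kernel computation is the quantitative factor $(1+T)e^{cT}$ that is then re-used verbatim in the stability estimate (Proposition~\ref{prop:stability}); your route gives existence and uniqueness more cleanly but, as written, the constant in the stability bound would look less sharp unless you redo the bookkeeping.

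Two small points you should tighten. First, Banach's theorem requires completeness of $\C([0,T];\P_1(\R^m))$ under the (weighted) Wasserstein metric; since the cost here is the truncated distance $1\wedge|x-y|$, $W_1$ only metrizes weak convergence, and weak limits of Cauchy sequences can lose mass at infinity. This is fixable (restrict to the closed subset of curves starting at $\mu_0$ whose time-Lipschitz constant and first moments are controlled by the a priori linear-growth bound on the drift, which is preserved by $\Phi$), but it should be said. Second, after your Gronwall step the correct output is
\[
 \sup_z|Z^\nu_t(z)-Z^{\tilde\nu}_t(z)|\ \le\ C_T\int_0^t \sup_{s\le r}\dist(\nu_s,\tilde\nu_s)\,dr ,
\]
with the running supremum inside the integral, not $\int_0^t\dist(\nu_s,\tilde\nu_s)\,ds$ as you wrote; this is harmless for the $e^{-\lambda t}$-contraction (one simply bounds $\sup_{s\le r}\dist(\nu_s,\tilde\nu_s)\le e^{\lambda r}\|\nu-\tilde\nu\|_\lambda$), but the displayed inequality as stated is not literally what Gronwall gives you.
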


\begin{remark}
 The family of measures $\{\mu_t=Z_t(\.)_\#\mu_0\}\subset\P_1(\R^m)$ generated by the flow $Z\in\C(\R_+\times\R^m;\R^m)$ provides a weak solution to the Vlasov equation
 \begin{align}\label{eq:vlasov}
  \partial_t\mu_t + \text{div\,}(F(t,z,\{\mu\}_t)\mu_t)=0,
 \end{align}
 More precisely, for all $h\in\C_0^\infty(\R^m)$, the functions $\mu_t(h)$ are differentiable,
 \[
  \frac{d\mu_t(h)}{dt} = \mu_t(F(t,\.,\{\mu\}_t)\.\nabla h)
 \]
 for all $t>0$, and $\mu_t(h)\to \mu_0(h)$ for $t\to 0_+$.
\end{remark}

Next, we show that solutions to the mean-field equation (\ref{eq:meanfieldode}) depend continuously on the initial probability measures $\mu_0\in \P_1(\R^m)$. To do so, we need to measure the difference of two probability measures $\mu,\nu\in\P_1(\R^m)$. A convenient way is to use the Monge--Kantorovich--Rubinstein distance $\dist$ on $\P_1(\R^m)$ defined in \cite{KantoRubin1958} (cf.~\cite{Villani2008}),
\[
 \dist(\mu,\nu) = \inf_{\pi\in\Pi(\mu,\nu)}\iint_{\R^m\times\R^m} 1\wedge|x-y|\,\pi(dxdy),
\]
where $\Pi(\mu,\nu)$ is the set of Borel probability measures on $\R^m\times\R^m$ such that
\[
 \iint_{\R^m\times\R^m} (\phi(x) + \psi(y))\pi(dx\,dy) = \int_{\R^m}\phi(z)\,dz + \int_{\R^m}\psi(z)\,dz
\]
for all $\phi,\psi\in\C_b(\R^m)$. This distance is also called the Wasserstein distance.

\begin{proposition}\label{prop:stability}
 Let $\mu_0^j\in\P_1(\R^m)$, and $Z^j\in\C(\R_+\times\R^m;\R^m)$ be the corresponding solution to the mean-field equation (\ref{eq:meanfieldode}) with initial conditions
 \[
  Z_0^j(z) = z,\quad \text{law}\,(Z_0^j) = \mu_0^j,\quad \text{for}\ j=1,2
 \]
 then for every $T>0$, the following stability estimate holds
 \[
  \sup_{t\in[0,T]}\dist(\mu_t^1,\mu_t^2) \le c\,(1+T)e^{cT}\,\dist(\mu_0^1,\mu_0^2),
 \]
 for some constant $c=c(F,H)>0$, where $\mu_t^j := Z_t^j(\.)_\#\mu_0^j$
\end{proposition}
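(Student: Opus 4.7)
My plan is to use a coupling argument in the spirit of Dobrushin, adapted to the retarded structure. First, pick an optimal coupling $\pi_0\in\Pi(\mu_0^1,\mu_0^2)$ achieving $\dist(\mu_0^1,\mu_0^2)$, and push it forward by the two flows to obtain $\pi_t:=(Z_t^1,Z_t^2)_\#\pi_0$, which is a coupling of $\mu_t^1$ and $\mu_t^2$. Testing against $\pi_t$ in the definition of $\dist$ gives
\[
  \dist(\mu_t^1,\mu_t^2)\le D(t):=\iint\bigl(1\wedge|Z_t^1(z_1)-Z_t^2(z_2)|\bigr)\,\pi_0(dz_1 dz_2),
\]
so it suffices to bound $D(t)$.

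Next, for fixed $(z_1,z_2)$, I would set $\psi(t):=1\wedge|Z_t^1(z_1)-Z_t^2(z_2)|$. Since $\psi$ is locally constant where $|Z_t^1-Z_t^2|>1$, its time derivative is nontrivial only where $\psi(t)=|Z_t^1-Z_t^2|$. Using the mean-field ODE (\ref{eq:meanfieldode}) together with the Lipschitz bound on $a$ and the Kantorovich--Rubinstein duality bound $|\K\mu(z)-\K\nu(z)|\le\Lip(B)\,\dist(\mu,\nu)$ (which follows from the $\Lip_b$ hypothesis on $B$, since $B(z,\cdot)/\Lip(B)$ is $1$-Lipschitz with respect to the truncated metric $1\wedge|\cdot-\cdot|$), one obtains
\[
  |\dot\psi(t)|\le(\Lip(a)+\Lip(B))\,\psi(t)+\Lip(B)\,\frac{1}{h(t)}\int_{t-h(t)}^t\dist(\mu_s^1,\mu_s^2)\,ds.
\]
Writing $G(\sigma):=\sup_{s\le\sigma}\dist(\mu_s^1,\mu_s^2)$, the retarded average is bounded by $G(\sigma)$. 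Integrating in time and then against $\pi_0$ yields
\[
  D(t)\le\dist(\mu_0^1,\mu_0^2)+C\int_0^t D(\sigma)\,d\sigma+C\int_0^t G(\sigma)\,d\sigma,\qquad C=\Lip(a)+\Lip(B).
\]
Since $\dist(\mu_s^1,\mu_s^2)\le D(s)$ implies $G(\sigma)\le\tilde D(\sigma):=\sup_{s\le\sigma}D(s)$, taking the supremum over $s\le t$ reduces the above to a standard Gronwall inequality for $\tilde D$, which then delivers the claimed stability bound.

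The main obstacle is the interplay between the $1\wedge|\cdot|$ truncation inherent in $\dist$ and the unbounded Lipschitz drift $a$: a naive pointwise Gronwall estimate for $|Z_t^1-Z_t^2|$ produces an exponential prefactor sitting outside the truncation, which would destroy the closure against $\dist$. The remedy is to work with $\psi$ from the outset, exploiting its local constancy on $\{|Z^1-Z^2|>1\}$, and to defer the Gronwall step until after integrating against $\pi_0$. The retarded averaging induced by $h(t)$ is then essentially transparent, since averages of $\dist(\mu_s^1,\mu_s^2)$ over any subinterval of $[0,t]$ are dominated by the non-decreasing envelope $G(t)$, which in turn is controlled by $\tilde D(t)$.
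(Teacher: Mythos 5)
Your proof is correct and uses the same coupling framework as the paper (push forward an initial coupling $\pi_0$ by the two flows, so that $\dist(\mu_t^1,\mu_t^2)\le D(t)$), but you close the argument differently. The paper derives the retarded Gronwall-type inequality $\D(t)\le\D(0)+\Lip(F)\int_0^t\D(s)\,ds+\Lip(B)\int_0^t\frac{1}{h(s)}\int_{s-h(s)}^s\D(\sigma)\,d\sigma\,ds$, then specialises to $h(s)=s$, rewrites the inner integral via Fubini into the kernel $g(t,s)=1+\ln t-\ln s$, and runs a Picard-style recursion (mirroring the existence proof in Appendix~A) to reach the $(1+T)e^{cT}$ bound; the case $H<\infty$ is dispatched with ``may be shown analogously.'' You instead dominate the retarded average by the monotone envelope $G(s)=\sup_{\sigma\le s}\dist(\mu_\sigma^1,\mu_\sigma^2)\le\tilde D(s)$, which collapses the retarded term into $\int_0^t\tilde D$, after which a plain Gronwall inequality for the nondecreasing, continuous, bounded $\tilde D$ yields $\tilde D(T)\le D(0)e^{2CT}$. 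This is simpler, it handles all $H\in(0,\infty]$ in one stroke, and it produces a constant that is at least as sharp as the $(1+T)e^{cT}$ in the statement. You are also more explicit than the paper about the one genuinely delicate point: because $a$ is Lipschitz but unbounded, one cannot first estimate $|Z_t^1-Z_t^2|$ and then truncate; your device of differentiating $\psi=1\wedge|Z^1-Z^2|$ directly and observing that $\dot\psi=0$ a.e.\ on $\{|Z^1-Z^2|>1\}$ (while $\psi=|Z^1-Z^2|$ on the complement) is exactly what justifies the inequality the paper states with only a pointer to Proposition~\ref{prop:exists}. The Kantorovich--Rubinstein step $|\K\mu(z)-\K\nu(z)|\le\Lip(B)\dist(\mu,\nu)$ is correct given the $\Lip_b$ hypothesis on $B$ with the truncated metric; the paper reaches the same conclusion via the coupling representation of $\mu_\sigma^j=Z_\sigma^j(\cdot)_\#\mu_0^j$, which lets it write $\D(\sigma)$ directly in place of $\dist(\mu_\sigma^1,\mu_\sigma^2)$, but both are equivalent here.
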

\begin{proof}
 Let $\pi_0\in \Pi(\mu_0^1,\mu_0^2)$, and define
 \[
  \D(t):= \iint_{\R^m\times \R^m} 1\wedge |Z_t^1(z_1)-Z_t^2(z_2)|\,\pi_0(dz_1 dz_2) 
 \]
 Following the arguments in the proof of Proposition~\ref{prop:exists}, we derive the estimate
 \[
  \D(t) \le \D(0) + \Lip(F)\int_0^t \D(s)\,ds + \Lip(B)\int_0^t \frac{1}{h(s)}\int_{s-h(s)}^s \D(\sigma)\,d\sigma\,ds.
 \]
 We only show the estimate for the case 
 $H=\infty$, i.e. $h(t) =t$. The general 
   case $H>0$ may be shown analogously. Similar to Proposition~\ref{prop:exists}, by simple but tedious computations, we have
 \[
 \D(t) \le \D(0) + \Lip(F)\int_0^t g(t,s)\,\D(s)\,ds,
 \]
 with $g(t,s)= 1 +\ln(t) - \ln(s)$. Recursively, we obtain
 \[
  \left(1 - \Lip(F)^k\frac{(n+1)T^k}{k!} \right)\sup_{t\in[0,T]}\D(t)\ \le \ \sum_{\ell=0}^k \Lip(F)^\ell\frac{(\ell+1)T^\ell}{\ell!}  \D(0).
 \]
 Therefore, passing to the limit $k\to\infty$ yields
 \[
  \sup_{t\in[0,T]}\D(t) \le c(1+T)e^{cT}\,\D(0),
 \]
 with $c=\max\{1,\Lip(F)\}$. Since
 \[
  \D(t) = \iint_{\R^m\times\R^m} 1\wedge|z_1-z_2|\pi_t(dz_1dz_2),
 \]
 where $\pi_t$ is the push forward measure of $\pi_0$ under the map $(Z_t^1,Z_t^2)$, and
 \[
  \pi_0\in\Pi(\mu_0^1,\mu_0^2)\ \Longrightarrow \ \pi_t\in \Pi(\mu_t^1,\mu_t^2)
 \]
 for any $t\ge 0$, we have
 \[
  \sup_{t\in[0,T]}\dist(\mu_t^1,\mu_t^2) \le c(1+T)e^{cT}\,\D(0)
 \]
 Finally, optimizing in $\pi_0$ yields the required assertion.
\end{proof}

\begin{remark}
For any $N\in \N$, the family of empirical measures $\{\mu_t^N,t\ge 0\}\subset\P_1(\R^m)$ defined by
$\mu_t^N=Z_t(\.)_\#\mu_0^N\in\P_1(\R^m)$, where 
\[
 \mu_0^N = \frac{1}{N}\sum_{j=1}^N \delta(\.-z^j),\quad z^j\in\R^m,\quad j=1,\ldots,N
\]
is a weak solution to the Vlasov equation (\ref{eq:vlasov}) by construction. Therefore, if we know that $\lim_{N\to\infty}W_1(\mu_0,\mu_0^N)\to 0$ for some $\mu_0\in\P_1(\R^m)$, then the stability result given in Proposition~\ref{prop:stability} provides the convergence
\[
 \lim_{N\to\infty}W_1(\mu_t,\mu_t^N)\to 0\quad\text{for any}\ t\in[0,T],
\]
where $\mu_t=Z_t(\.)_\#\mu_0$, with $Z\in\C([0,T]\times\R^m;\R^m)$.
\end{remark}

\subsection{Application to the retarded fiber equations} 
We now use the results above to show the mean-field limit of (\ref{eq:deterministic}) towards (\ref{eq:fibervlasov}).
For this reason, we denote $Z_t^i=(x_t^i,\tau_t^i)\in\R^d\times\R^d$, and write $a=(a_1,a_2)$, $B=(B_1,B_2)$ with
\begin{align*}
 a_1(Z_t^i) = \tau_t^i ,&\quad
 a_2(Z_t^i) = - \frac{1}{d-1} \big(I - \tau_t^i \otimes  \tau_t^i\big)\circ\nabla_{x} V( x_t^i), \\
 B_1(Z_t^i,Z_s^j) = 0,&\quad B_2(Z_t^i,Z_s^j) = - \frac{1}{d-1}\big(I - \tau_t^i \otimes  \tau_t^i\big)\circ\nabla_{x} U( x_t^i- x_s^j ).
\end{align*}
Obviously, we are unable to directly apply the results developed above, since $a$ and $B$ do not satisfy the assumptions above. Consider $B_2$ for the moment. Tedious but simple computations yield
\begin{align*}
 |B_2(z_1,\hat z) - B_2(z_2,\hat z)| &\le \Lip(\nabla_x U)\Big( (1+|\tau_1|^2)|x_1-x_2| + (|\tau_1| + |\tau_2|)|\tau_1-\tau_2|\Big) \\
 |B_2(z,\hat z_1) - B_2(z,\hat z_2)| &\le \Lip(\nabla_x U)(1+|\tau|^2)|\hat x_1-\hat x_2|
\end{align*}
Hence, $B$ is not globally Lipschitz. Fortunately, if we only consider $B$ on the subset $\M\subset \R^d\times\R^d$, then $\Lip(B)=2\Lip(\nabla_x U)$. Consequently, $B$ is globally Lipschitz on $\M$. Clearly, the same conclusion holds for $a$.

In order to ensure that $Z_t^i\in\M$ for all $t\ge 0$, we observe that
\[
 \frac{d}{dt}\frac{1}{2}|\tau_t^i|^2 = 0\quad\text{for all}\ t\ge 0,\quad i=1,\ldots,N.
\]
Therefore, if we start with $\tau^i\in \S^{d-1}$, then also $\tau_t^i\in \S^{d-1}$ for all other times $t>0$, and we may apply our results obtained above to initial measures $\mu_0\in\P_1(\R^d\times\R^d)$ with $\text{supp}(\mu_0)\subset \M$. Indeed, since $Z_t(x,\tau)\in \M$ for any $(x,\tau)\in\M$, we may consider the flow on $\M$ and the push forward $\mu_t = Z_t(\.)_\#\mu_0\in \P_1(\M)$, as soon as $\text{supp}(\mu_0)\subset \M$.

\section{Large diffusion scaling}\label{diffusionsection}
In this section we formally investigate situations with large  values for  the noise amplitude on a diffusive time scale, i.e., we change $\tilde{L} = \epsilon L$ and $\tilde{t} = \epsilon t$ (cf.~\cite{BLP78,HKMO09}). Replacing the scaled operators in (\ref{pderot}) and omitting the tilde lead to the scaled equation
\begin{align}\label{pdescaled2}
 \epsilon\,\partial_t f  + \tau \. \nabla_x f +   Sf  =   \frac{1}{\epsilon} L f.
\end{align}
We use a {\em Hilbert expansion} of the form $f = f_0 + \epsilon f_1 + \dots$ for (\ref{pdescaled2}).
To order 1, we simply get $ f_0 = f_0(x) = \rho(x)$. To order $\epsilon$, one obtains
\[
 \tau \. \nabla_{x} f _0   + S f_0 = \frac{A^2}{2} \Delta_\tau f_1,
\]
which, due to the Fredholm alternative, gives
\[
 f_1 = - \frac{2}{A^2(d-1)}  \tau  \. f_0\, \nabla_x \left(  \ln f_0 + V + \frac{1}{h(t)} \int_{t-h(t)}^t (U\star f_0)(s,\.)\,ds \right).
\]
Integrating (\ref{pdescaled2}) with respect to $d\nu$ gives
\[
 \epsilon\,\partial_t \int_{\S^{d-1}} f\, d\nu + \nabla_{x} \.  \int_{\S^{d-1}} \tau  f \,d\nu =0. 
\]
Considering terms up to order $\epsilon$, we obtain
\[
 \partial_t f_0 + \nabla_{x} \.  \int_{\S^{d-1}} \tau  f_1\, d \nu = 0.
\]
Therefore, inserting $f_1$ and computing the integral over the tensor product yields
\[
  \partial_t \rho = \frac{2}{d(d-1)A^2}  \nabla_{x} \. \left[\rho\,\nabla_x\left( \ln \rho   + V + \frac{1}{h(t)} \int_{t-h(t)}^t (U\star\rho)(s,\.)\,ds \right)\right].
\]
This equation is similar to an equation derived in \cite{BGKMW07,HKMO09,KST14}  for the case without an interaction potential. 
The stationary equation reads
\[
 \rho\,\nabla_x\Big( \ln \rho   + V + U\star\rho \Big)  =0,\quad \int_{\R^d} \rho\, dx =1,
\]
 which leads again to the integral equation (\ref{eq:integral})
as for the mean-field case.

\section{Numerical Method and Results}\label{numerics}
In this section we investigate the qualitative behavior of solutions corresponding to the interacting fiber model numerically. More specifically, we consider the case of isotropic fibers in three spatial dimensions.

\subsection{Numerical methods}

We describe numerical solvers for the microscopic, mean-field and macroscopic equations, respectively.

\subsubsection{Microscopic model}
To solve equations (\ref{2dmodelcoordfreeinter}) or (\ref{modelshortdelay}) numerically we use the Euler--Maruyama method.
Using  It\^o integration  (\ref{2dmodelcoordfreeinter}) is written as
\begin{align*}
dx_t^i &= \tau_t^i \,dt \\
d\tau_t^i &= -\frac{1}{2}\big(I-\tau_t^i\otimes\tau_t^i\big)\bigg(\nabla_{x} V(x_t^i) +\frac{1}{N}\sum_{j=1}^N\frac{1}{t}\int_0^t\nabla_{x} U(x_t^i-x_s^j)\,ds\bigg)dt\\
&\hspace*{18em} + A^2\tau_t^i\,dt +A\big(I-\tau_t^i\otimes\tau_t^i\big)dW_t^i.
\end{align*}
The It\^o form of (\ref{modelshortdelay}) is obtained in an analogous way. We consider an  equidistant time grid given by $0=t_0<...<t_n$ with step size $\Delta t$. We denote $x^i_n:=x_{t_n}^i$ for the position of fiber $i$ at time $t_n=n\dt$. The time integration in (\ref{2dmodelcoordfreeinter}) is approximated by
\begin{align}\label{nobuf}
 \frac{1}{t}\int_0^t\nabla_{x}U(x_t^i-x_s^j)\,ds\ \sim\ \sum_{k=1}^n\frac{1}{t_n}\nabla_{x}U(x^i_n-x^j_k)\dt = \frac{1}{n}\sum_{k=1}^n \nabla_{x}U(x^i_n-x^j_k).
\end{align}
For (\ref{modelshortdelay}) with $H>0$, we set up a buffer to store the previous positions $x_t^i$. The buffer size is determined by $n_{buf}=h(t)/\dt$. The time integration is then  approximated by
\begin{align}\label{buf}
\frac{1}{h(t)}\int_{t-h(t)}^t\nabla_{x} U(x_t^i-x_s^j)\,ds\ \sim\ \sum_{k=1}^{n_{buf}}\frac{1}{h(t_n)}\nabla_{x} U(x^i_n-x^j_{n-k})\dt.
\end{align}
Hence, the Euler--Maruyama iteration is given by
\begin{align*}
x^i_{n+1}= x^i_n &+\dt\. \tau^i_n\\
\tau^i_{n+1}= \tau^i_n &+\dt\. \Big[-\frac{1}{2}\big(I-\tau^i_n\otimes\tau^i_n\big)\Big(\nabla_{x}V(x^i_n)\\
&+\frac{1}{N}\sum_{j=1}^N\Big(\sum_{k=1}^n\frac{1}{t_n}\nabla_{x}U(x^i_n-x^j_k)\dt\Big)\Big)-A^2\tau^i_n\Big]\\
&+\sqrt{\dt}\. A\big(I-\tau^i_n\otimes\tau^i_n\big)R^i_n
\end{align*}
where $R^i_n\in\R^3$, for $i=1,...,N$, is a vector containing three normally distributed pseudorandom values. For (\ref{modelshortdelay}), one has to replace the sum over all previous time steps (\ref{nobuf}) by the sum over the time step buffer (\ref{buf}). Typical simulations of the microscopic system include $N\sim 10^6$ fibers, whose positions are used to obtain a histogram, which will be compared to the solution of the mean field equation described below. For such large values of $N$, the evaluation of the interaction term is quite costly. Several measures were taken to ensure reasonable computing time: 
\begin{enumerate}
 \item We did not sum up every time step in (\ref{nobuf}) and considered only every $\tilde{n}$-th time step. In that way, the results are not altered in a significant way, if $\tilde{n}$ is not too large.
 \item Furthermore, the Fortran code carrying out the microscopic simulations was parallelized using OpenMP. If a reasonable scaling of the computing time with the number of processors is to be achieved, the parts of the code running in parallel have to be as mutually independent as possible, in order to reduce the overhead in communication between parallel threads.
 \item Therefore, we divided $N$ into smaller groups of fibers, which then interact only within each group, thereby enabling a parallel computation with very little overhead. The histogram for the spatial density is then produced from the position data of all the groups. 
\end{enumerate}

\subsubsection{Mean field equation}
The numerical methods used here are an advancement of the schemes used in \cite{J.A.Carrillo2014,RKSZ14,R14}. We apply a second order Strang splitting \cite{C.Cheng1976} to (\ref{pderot}) to obtain subproblems on spatial and velocity domain. We split the equation
\[
\partial_tf+\tau\.\nabla_xf+Sf =Lf,
\] 
into the subequations
\begin{align}
\partial_tf^{(1)}&=-\frac{1}{2}(Sf^{(1)}-Lf^{(1)}), \label{velo1}\\
\partial_tf^{(2)}&=-\tau\.\nabla_xf^{(2)}, \label{spatial}\\
\partial_tf^{(3)}&=-\frac{1}{2}(Sf^{(3)}-Lf^{(3)}). \label{velo2}
\end{align}
Equation (\ref{spatial}) has to be solved only on the spatial domain $\R^3$, while (\ref{velo1}) and (\ref{velo2}) are to be solved on the velocity domain $\S^2$. Note however, that all the computation steps on one grid have to be carried out for every grid point on the other grid.

\begin{figure}
\centering
\includegraphics[clip,width=0.45\columnwidth]{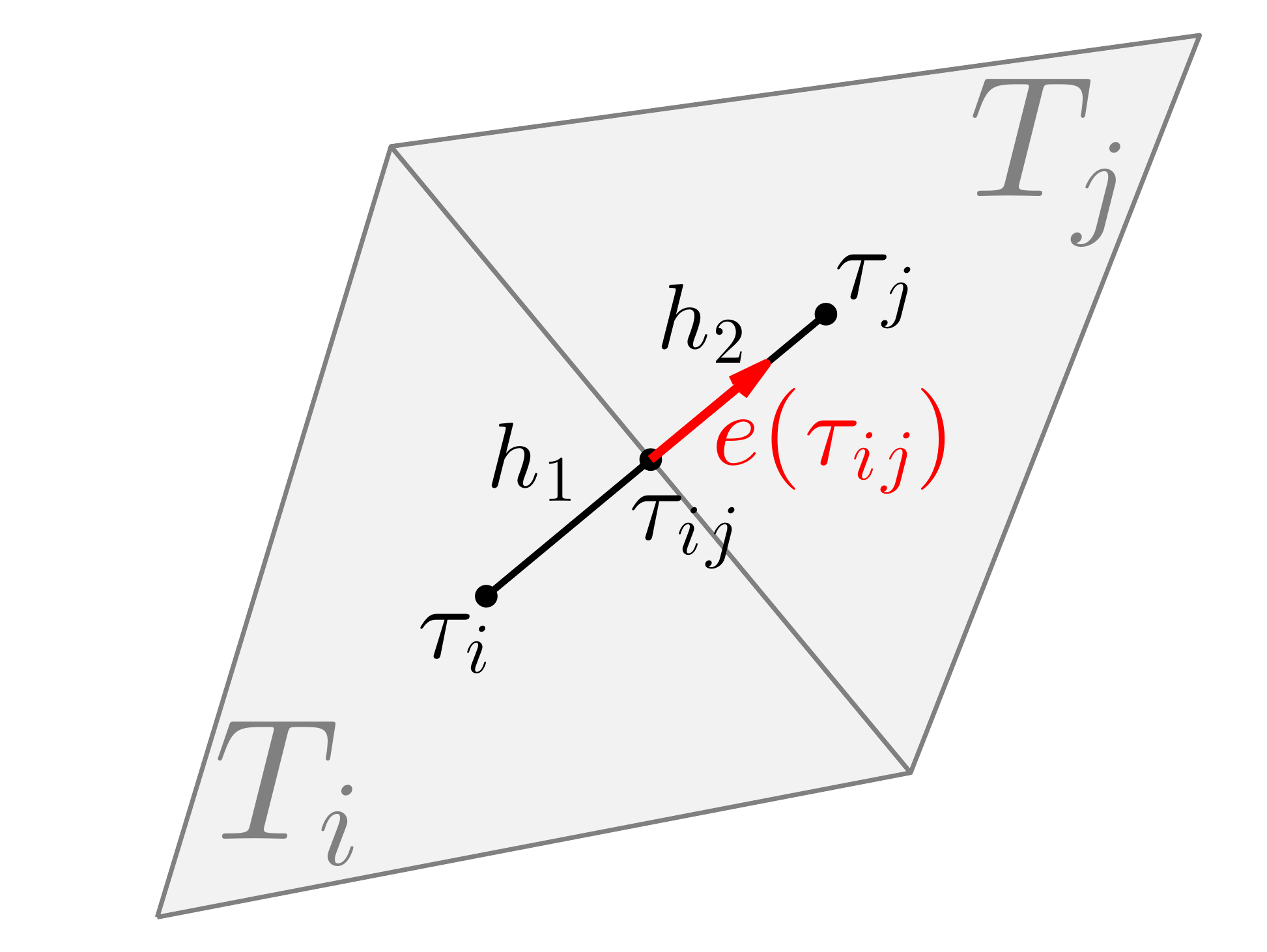}
\includegraphics[clip,width=0.5\columnwidth]{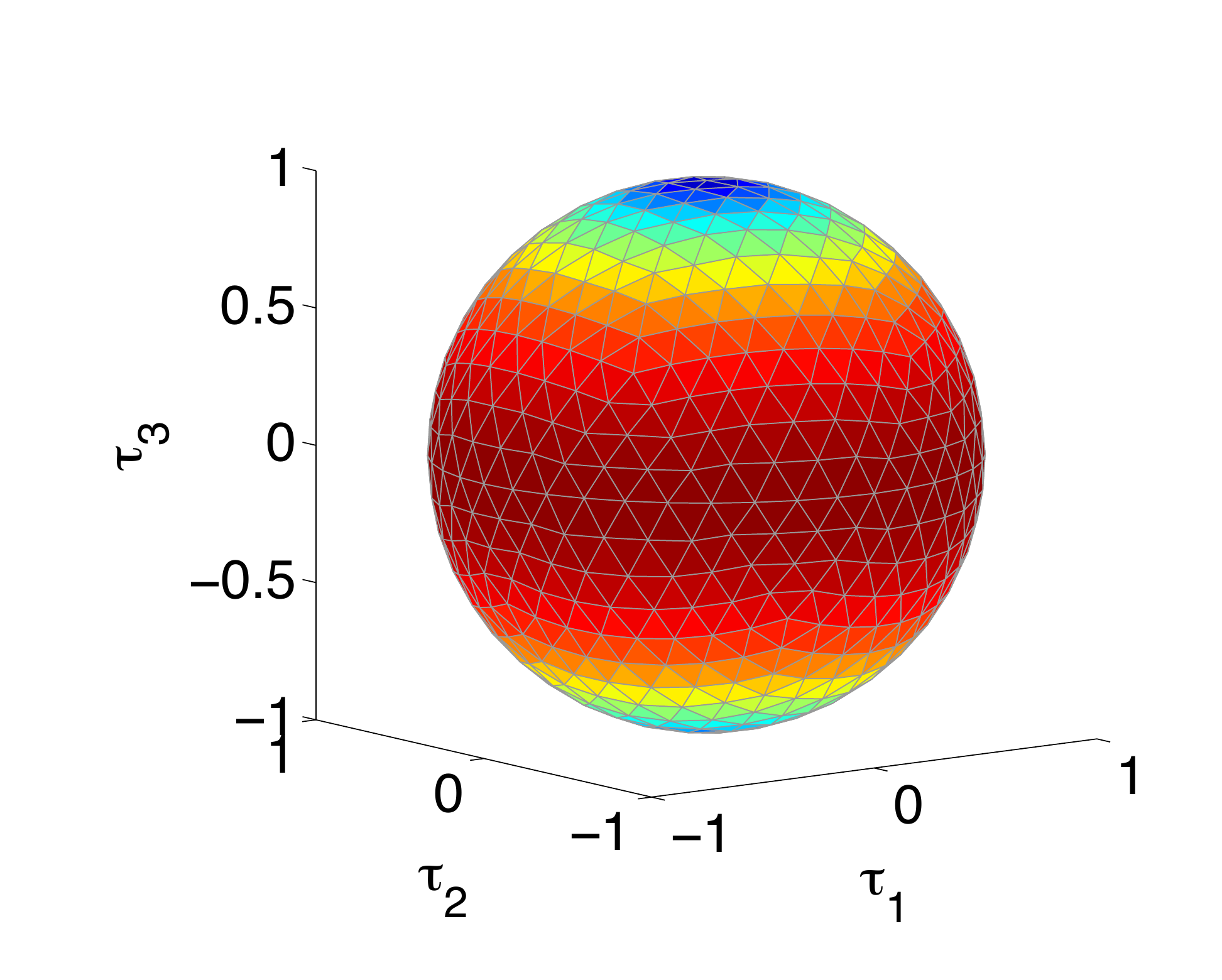}
\caption{The left figure depicts two connected triangles of the geodesic grid, while the right figure shows the full spherical grid.}
\label{gridvis}
\end{figure}  

Firstly, we discuss the solution of (\ref{velo1}) and (\ref{velo2}) with a finite volume discretization of the velocity space based on a geodesic grid (cf.~\cite{RKSZ14,AltGeo3}). The geodesic grid consists of spherical triangles, which are represented with vertices and normals in $\R^3$, neighbor relations and correct distance and surface measures. In this way the geometry is included implicitly into the method. We denote by $T_i$ the $i$-th cell of the grid with cell midpoint $\tau_i$. The cell midpoint is chosen to be the intersecting point of the great circle arcs passing perpendicularly through the midpoints of the cell edges. $|\.|$ denotes a length or surface measure, $T_{ij}$ denotes the edge between cells $T_i$ and $T_j$, $\tau_{ij}$ denotes the edge midpoint, and $e(\tau_{ij})$ denotes the outer normal vector of cell $i$ at the edge midpoint $\tau_{ij}$. The distance from cell midpoint $\tau_i$ to $\tau_j$ is given by $h_{ij}$, which is divided into the parts $h_1$ and $h_2$. Due to the construction of the grid, the distance between cell midpoints does not vary that much, and $h_1,h_2\sim h_{ij}/2$, so all the cells have nearly the same size and shape. The reader is referred to Figure~\ref{gridvis} for a visualization of the grid structure and notations.

As for any finite volume scheme, the solution is obtained via cell averages
\[
 f_i^n=\frac{1}{|T_i|}\int_{T_i}f(t_n,x,\tau)\,d\tau,
\]
where $d\tau$ is the canonical surface measure on the sphere. We define 
\[
 F(t,x,\tau):=\frac{1}{2} (I-\tau\otimes\tau)\Big(\nabla_xV+\frac{1}{h(t)}\int_{t-h(t)}^t\nabla_x U\star\rho\,ds\Big),
\] 
and integrate (\ref{velo1},\,\ref{velo2}) over the cells $[t_n,t\np]\times T_i$ to obtain
\begin{align*}
{f\hnp_i-f_i^n} =\frac{1}{|T_i|}\sum_{j\in N(i)} & \Bigg[\int_{t_n}^{t\np} \int_{T_{ij}}F(s,x,\tau)\. e(\tau) f\,d\tau\,ds\\
&\hspace*{4em} +\frac{A^2}{2}\int_{t^n}^{t\np}\int_{T_{ij}}\nabla_\tau f\. e(\tau)\,d\tau\, ds\Bigg].
\end{align*}
Applying the midpoint rule on the cell edge and in time, one obtains the iteration
\begin{align*}
\frac{f\hnp_i-f_i^n}{\dt} \sim\frac{1}{|T_i|}\sum_{j\in N(i)} & \Bigg[|T_{ij}|F(x,\tau_{ij})\. e(\tau_{ij}) f(t\nph,x,\tau_{ij})\\
& +\frac{A^2}{2}|T_{ij}|\nabla_\tau f(t\nph,x,\tau_{ij})\. e(\tau_{ij})\Bigg].
\end{align*}
The overall order of the method depends on the approximation of $f(t\nph,x,\tau_{ij})$ and the normal flux $\nabla_\tau f(t\nph,x,\tau_{ij})\. e(\tau_{ij})$. In this case, we choose
\begin{gather*}
 f(t\nph,x,\tau_{ij})\sim\frac{h_1+\frac{\dt}{2}(F(x,\tau_{ij})\. e(\tau_{ij}))}{h_{ij}}\. f_i^n+\frac{h_2-\frac{\dt}{2}(F(x,\tau_{ij})\. e(\tau_{ij}))}{h_{ij}}\. f_j^n, \\
 \nabla_\tau f(t\nph,x,\tau_{ij})\. e(\tau_{ij})\sim\frac{f_j^n-f_i^n}{h_{ij}}.
\end{gather*}
The approximation for $f(t\nph,x,\tau_{ij})$ is  similar to the Lax--Wendroff numerical flux function, except that we evaluate $F(x,\tau)\. e(\tau)$ at the edge midpoint $\tau_{ij}$, and not at the cell midpoints $\tau_i$ and $\tau_j$, since  the grid structure only provides normal vectors at the cell interfaces. The value for $\nabla_\tau f(t\nph,x,\tau_{ij})\. e(\tau_{ij})$ is obtained by a finite difference approximation on the connecting circle arc of the cell midpoints $\tau_i$ and $\tau_j$, see \cite{RKSZ14}. Although  the method is not a second order method, the numerical results are close to those of a second order method, see the discussion below and Figure \ref{rate} for the convergence rates of the splitting scheme.

Equation (\ref{spatial}) is solved using a semi-Lagrangian method \cite{KRS07,Sem6} on an equidistant grid $x_{ijk}\in \R^3, i,j,k=1,...,n_x$ with grid size $\dx$.  The characteristic curves $\gamma(t)$ of (\ref{spatial}) starting at grid point $x_{ijk}$ at time $t\np$ are given by
\[
\gamma(t) = x_{ijk}+t\.\tau.
\]
 The derivative of $f$ with respect to time on a characteristic curve is given by
\[
\frac{d}{dt}f(t,\gamma(t),\tau) =\partial_tf(\gamma(t),\tau,t)+\tau\.\nabla_xf(\gamma(t),\tau,t)=0
\]
and it follows, that $f$ is constant along the characteristic curve:
\[
f(t\np,x_{ijk},\tau) = f(t_n,\gamma(-\dt),\tau).
\]
Since only  the values of $f$ at the grid points at time $t_n$ are known,  we have to interpolate $f(t_n,\gamma(-\dt),\tau)$ from $f(t_n,x_{ijk},\tau)$. The order of the method  depends on the order of the interpolation procedure, since the characteristic curves can be computed analytically in this case. An order higher than one  will produce unphysical oscillations at discontinuities of the numerical solution, which has to be prevented. We use a Bezier interpolation \cite{J.A.Carrillo2014}. We use the notations  $x_{ijk}=(x_{ijk}^l)_{l=1,2,3}$,  $\gamma(-\dt)=(\gamma_l(-\dt))_{l=1,2,3}\in[x_{ijk}^1,x_{ijk}^1+\dx]\times[x_{ijk}^2,x_{ijk}^2+\dx]\times[x_{ijk}^3,x_{ijk}^3+\dx]$ and define $\xi=(\xi_l)_{l=1,2,3}$ as follows
\[
\xi_l=\frac{\gamma_l(-\dt)-x^l_{ijk}}{\dx},\quad l=1,2,3.
\]
Then the interpolated values on each cell are given by an interpolating polynomial of three space variables of the following form:
\[
f(t_n,\gamma(-\dt),\tau) = \sum_{\lambda,\mu,\nu=0}^3B_{\lambda,3}(\xi_1)B_{\mu,3}(\xi_2)B_{\nu,3}(\xi_3)v_{\lambda\mu\nu},
\]
where $B_{l,3}(\xi)=\binom{l}{3}\xi^l(1-\xi)^{3-l}$, $l\in\{0,1,2,3\}$ are the cubic Bernstein polynomials, and $v_{\lambda\mu\nu}$ are the control values for the interpolation. This interpolant never leaves the convex hull of the control values. These values are chosen appropriately, such that the required interpolation order is preserved on smooth sections of the solution, and oscillations are prevented elsewhere. This can be done by computing an interpolating Newton polynomial of appropriate order, performing a change of basis to the Bernstein polynomials and then shifting control points back into the convex hull of the neighboring grid function values. See \cite{J.A.Carrillo2014} for more information on this method

We note that semi-Lagrangian methods are not conservative in general. For linear advection, we get a conservative method using the Bezier interpolation procedure without slope limiting. In \cite{Sem3}, this is shown for a third order interpolating polynomial, which is identical to the Newton polynomial. Since we reproduce this polynomial in the Bernstein polynomial basis, the same computation can be done for the Bezier interpolation used here. However, the slope limiting procedure destroys mass conservation and we have to apply conservation techniques  as described  in \cite{Sem2,KRS07}.


\subsection{Stationary equation}
The stationary equation (\ref{integral}) is solved via an iteration scheme. We use the fixed-point form (\ref{eq:fixedpoint}) and the iterative scheme
\begin{align*}
\rho_{n+1} = \frac{e^{-\left( V + U \star \rho_n\right)}}{\int e^{-\left( V + U \star \rho_n\right)} dx}.
\end{align*}
As starting point $\rho_0$ we use the solution of the stationary equation without interaction, namely $\rho_0=Ce^{-V}$. The iteration is well-defined, i.e., $\rho_{n}$ is integrable and has integral one. In addition,  $\rho_n$ is strictly positive for all $n \in \N$.

For the implementation we have to approximate the convolution $U \star \rho$ or $\nabla_xU\star\rho$. This is done using the midpoint rule on each grid cell. One obtains
\begin{align}
(U \star \rho) (x_{ijk})= \int U(x_{ijk}-y) \rho(y) dy \ \approx\ \sum_{\lambda,\mu,\nu} U(x_{ijk}-x_{\lambda\mu\nu}) \rho(x_{\lambda\mu\nu}) (\dx)^3, 
\end{align}
where $x_{ijk}$ are the grid points and $\dx$ is the (constant) grid size in each direction, as before. For the computation of the convolution, one has to compute the distance matrix $(|x_{ijk}-x_{\lambda\mu\nu}|)$, which consumes a lot of computing time and memory. One advantage over the solution of the microscopic system is, that the grid points are fixed in contrast to the fiber positions in the microscopic context. Therefore, the distance matrix may be precomputed. Furthermore, one can precompute for each grid point a list of neighbor points, which have a relevant effect on the computation for a given set of parameters and the purely repulsive potential (\ref{Uinteraction3}). Only these points provide a relevant contribution to the interaction force.

The current implementation only considers interaction forces, which are higher than $0.1$ percent of the maximal occurring force as relevant. A considerable amount of memory can be saved by applying this weak restriction. The time integration in (\ref{Srot}) for $h(t)=t$ does not increase the effort in the mean-field context, because it can be realized by just summing up the interaction forces from different time steps. For the case $H\in(0,\infty)$, a time step buffer as in the microscopic context is used.


\subsection{Numerical results}

We show numerical results for the microscopic model (\ref{modelshortdelay}) and the mean-field equation (\ref{pderot}) in three spatial dimensions. In particular, we investigate  the convergence of the microscopic and mean-field solutions to the corresponding stationary state. The stationary states obtained from microscopic and mean-field computations are compared to the solution of (\ref{eq:fixedpoint}). For a more detailed investigation of the convergence to equilibrium for different values of the delay $H$, we investigate the time decay of the distance to equilibrium for microscopic and mean-field equations and compare it  to the one obtained for the case without interaction. Moreover, the sensitivity of the stationary solutions with respect to the parameters of the interaction model is investigated.

For all computations we consider the three dimensional case for (\ref{modelshortdelay}). Unless otherwise stated, we choose the interaction potential given in (\ref{Uinteraction3})  with $k=10$, $C=10$ and $R = 1.4$. The coiling potential $V$ is chosen as $V(x) = | x |^2/2$.

We are especially interested in the comparison of the numerical results for the mean-field equation and the microscopic stochastic model. 
For this comparison, we consider different noise coefficients $A$ and a box-shaped initial condition for the mean-field equation given by
\begin{align*}
f_0(x,\tau)=C\.\begin{cases}
1, & \text{for}\ x\in[-1,1]^3,\, \tau_3>0\\
0, & \text{else}
\end{cases},
\end{align*}
where $\tau_3$ denotes the third component of the vector $\tau$ and $C$ is a normalizing constant. The corresponding initial condition for the microscopic system can be obtained by randomly placing particles in the cube $[-1,1]^3$ with initial velocities having the third component larger than zero.  We start the investigation by studying order of convergence of the numerical method for the mean-field equation described above.

\subsubsection{Order of convergence of the numerical method for the mean-field equations without interaction}

In this section we give a numerical investigation of the method for the mean field equation for the case without interaction. Although the method is not a second order method, owing to the geodesic grid, the numerical results show that the numerical order of convergence is of order two. Let $n_x$ be the number of  spatial grid points in each direction and $n_k$  the number of cells on the sphere, $h_x$ and $h_k$ are the respective grid sizes. Note, that $h_k$ is the average grid size on the geodesic grid. The spatial step size was chosen to match $h_k$.
The following grid sizes were simulated:
\begin{center}
\begin{tabular}{|c|c|c|c|}\hline
$n_x$ & 11& 21& 41 \\\hline
$n_k$ & 20& 80& 320 \\\hline
$h_x$ & 0.76 & 0.38 & 0.19 \\\hline
$h_k$ & 0.730 & 0.353 & 0.175 \\\hline
$err_{L2}$ & 0.004604 & 0.000777 & 0.000175 \\\hline
\end{tabular}\\[1em]
\end{center}
From this data, we obtain the convergence plot shown in Figure~\ref{rate}.

\begin{figure}
\centering
\includegraphics[clip,width=0.5\textwidth]{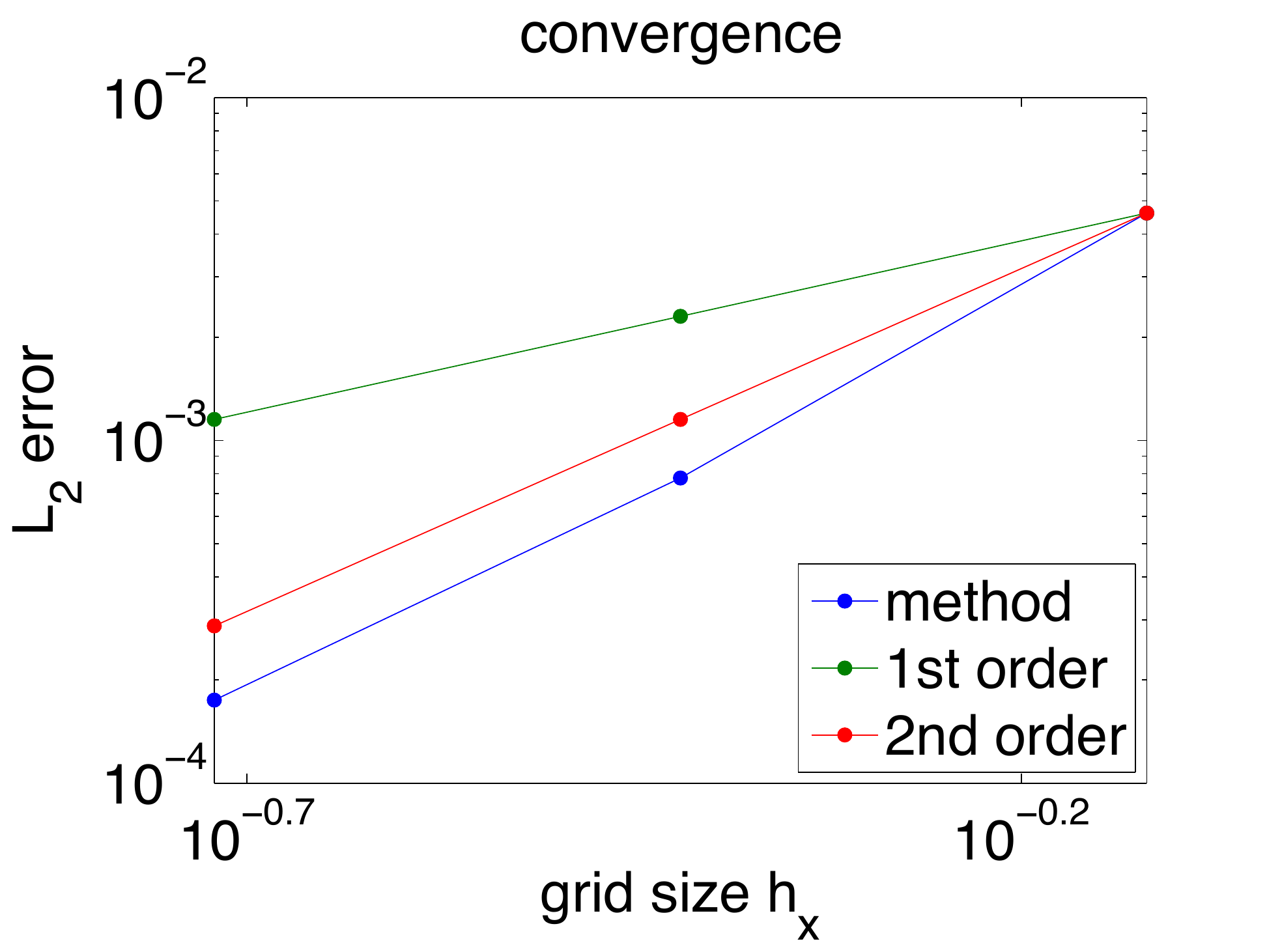}
\caption{Order of convergence for the mean-field equation without interaction. The green and red lines correspond to first and second order error, respectively.}
\label{rate}
\end{figure}  

\subsubsection{Stationary solution}

In Figure~\ref{figstat}, we consider the stationary distributions obtained from the microscopic and mean-field equations for different values of the delay $H$. We show radial plots of the radially symmetric density function. The case without interaction is compared to the case with interaction, with delay given by  $H\in\{0,0.1,0.5,\infty\}$. The grid resolutions simulated for the mean-field equations were $n_x=40$ points in each spatial direction with a grid size of $h_x=0.205$  for the cases without interaction and the cases $H\in\{0,\infty\}$. To slightly reduce the computing time, we used a smaller spatial resolution of $n_x=35$ and $h_x=0.235$ in the cases with $H\in\{0.1,0.5\}$. In all cases, the sphere was discretized by 320 geodesic triangles, which led to an average cell midpoint distance of $0.175$. These grid sizes where matched by the histograms generated from the microscopic fiber positions and velocities. 

To achieve smooth data, the total number of simulated fibers has to be chosen sufficiently large. In the case without interaction, the microscopic density is generated from $4.8\.10^6$ simulated fiber positions. For the cases with interaction, we computed 800 realizations of a system of 600 interacting fibres and generated the histogram from the fibre positions of all the realizations, which means that the histogram is based on $4.8\. 10^5$ microscopic fiber positions. As mentioned earlier, the interaction computation in the microscopic context is quite costly. So, although the computing time for $4.8\.10^6$ fibres without interaction took only a couple of minutes, depending on how many processing cores we have at our disposal, it took up to several days of computing time for the case $H=0.5$, which is the most expensive task here. Computing times for the mean-field solver lie in the range of $3$ to $24$ hours on the given hardware, which was an Intel XEON E5 2670 with 8 cores at 3.3GHz.

As expected, the stationary distribution does not change with different values of $H$. However the stationary distribution for the case with interaction is significantly wider than the distribution for the case without interaction. We note that the solutions obtained by the microscopic and by the mean-field equation are in very good agreement for the cases considered here.

\begin{figure}
\includegraphics[clip,width=0.5\textwidth]{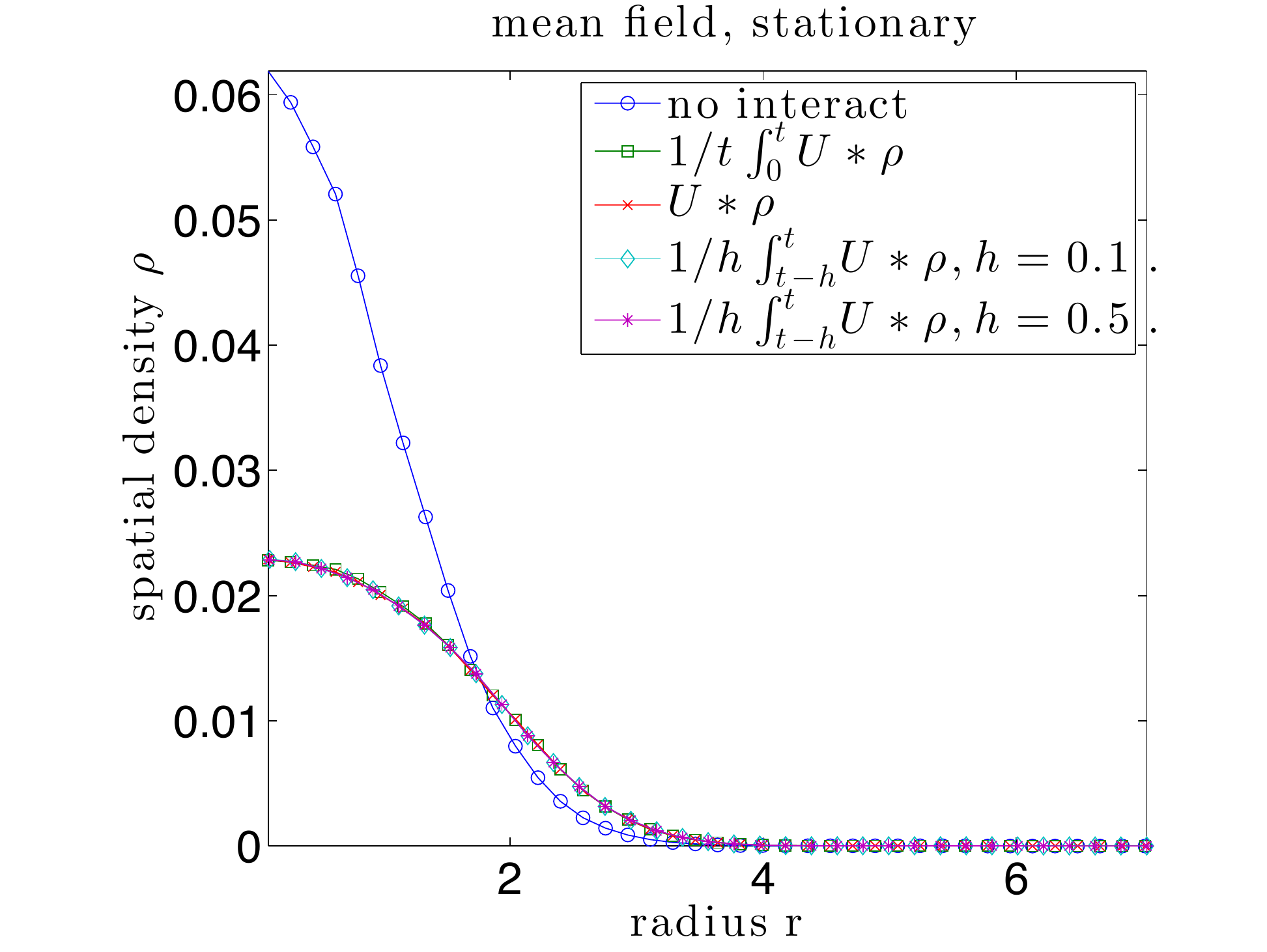}
\includegraphics[clip,width=0.5\textwidth]{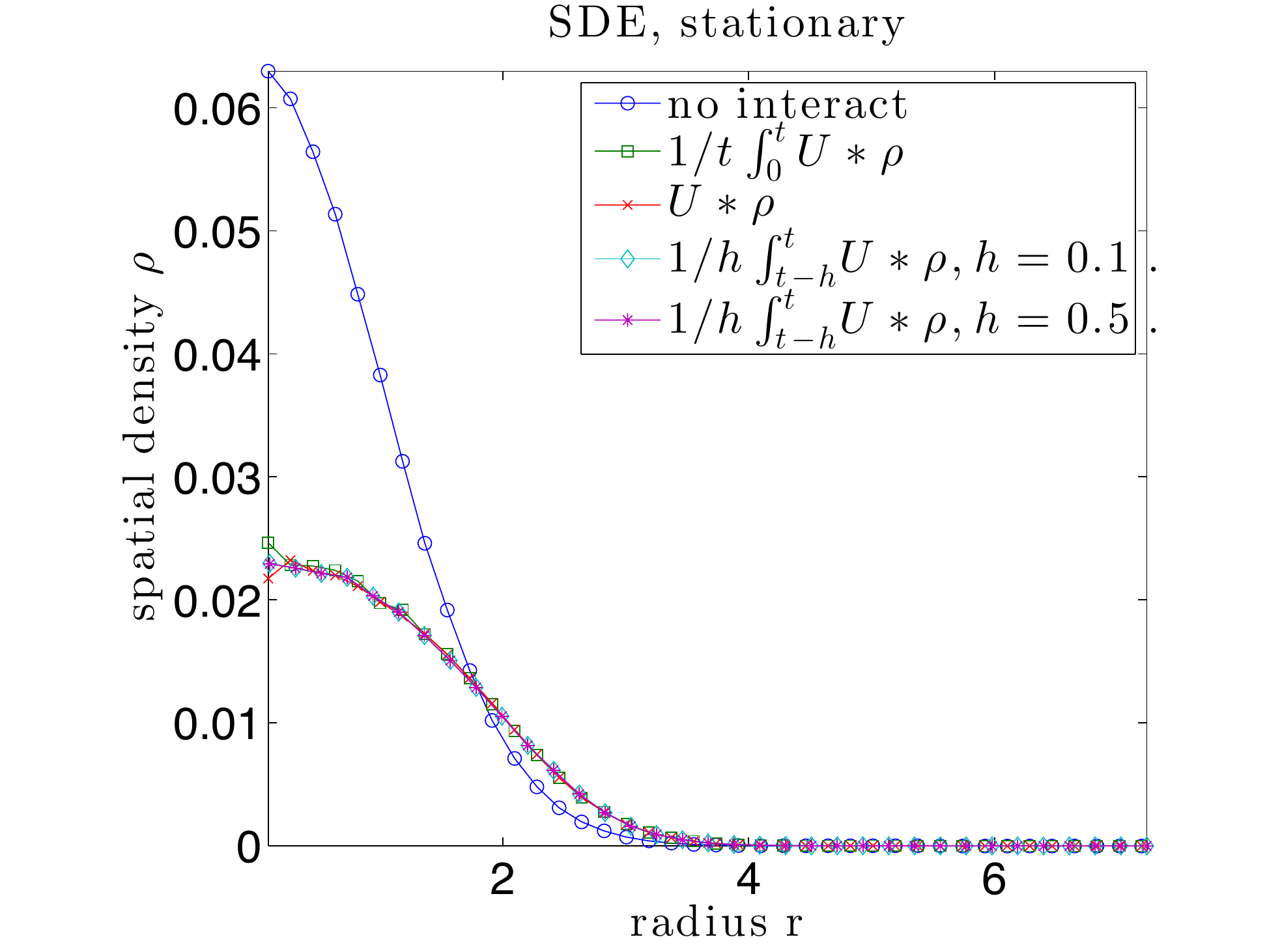}
\caption{In blue: stationary solution without interaction; in green:  stationary solution for retarded interaction term without cut-off, in red: stationary solution for retarded interaction term with cut-off $H=0.1$.}
\label{figstat}\end{figure}  

\subsubsection{Convergence to equilibrium}

In this subsection we invstigate the convergence to equilibrium in more detail. In Figures~\ref{figfleece1} and \ref{figfleece2},  we investigate the behaviour (in time) of the $L_2$-norm of the distance to equilibrium for the spatial density $\rho$.  Again, the case without interaction is compared to the case with interaction with cut-off given by  $H\in\{0,0.1,0.5,\infty\}$. In Figure~\ref{figfleece1}, the case $A=1.0$ is considered, while Figure~\ref{figfleece2} shows the case $A=0.5$. 

\begin{figure}[h]
\includegraphics[clip,width=0.5\textwidth]{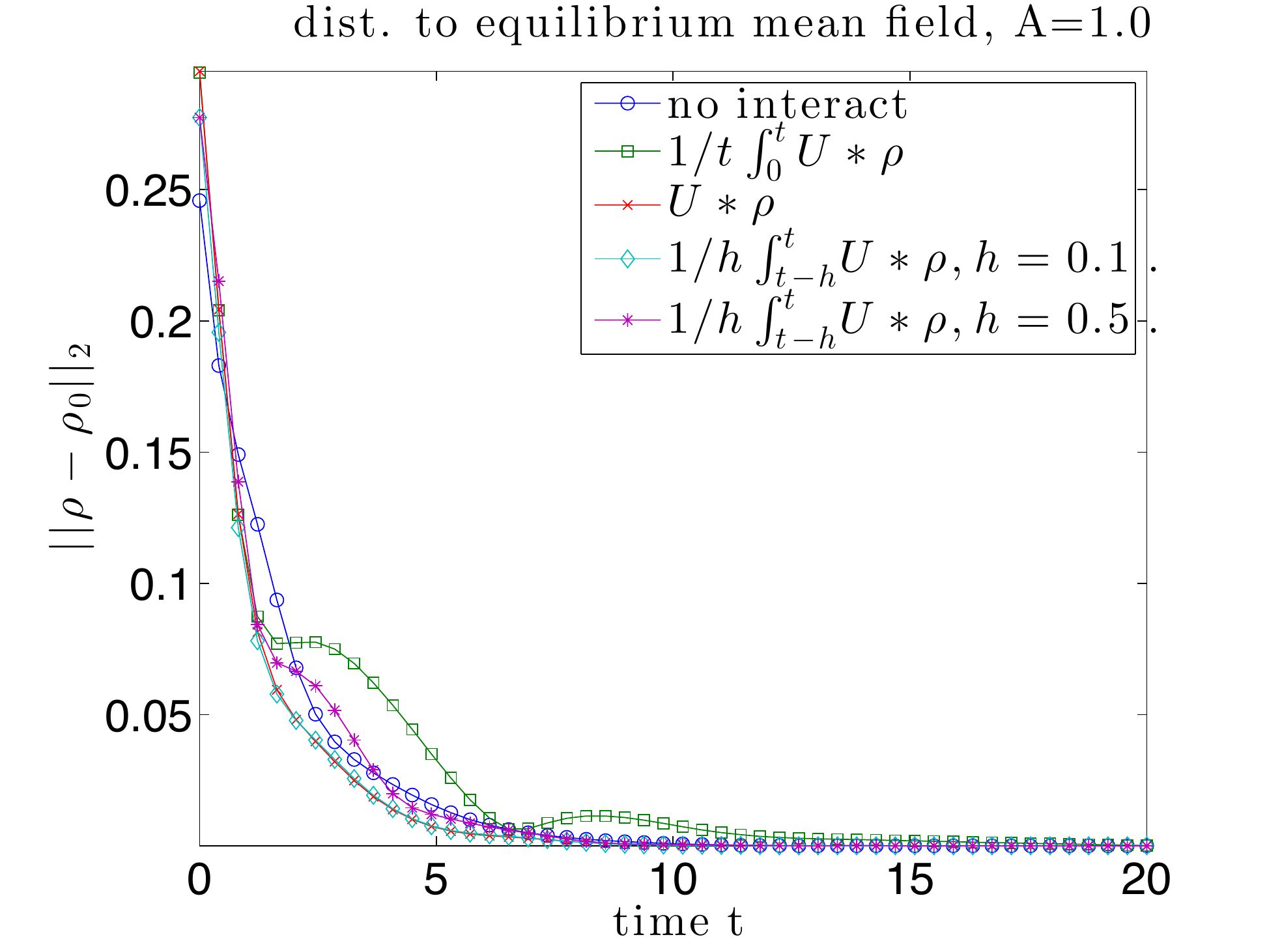}
\includegraphics[clip,width=0.5\textwidth]{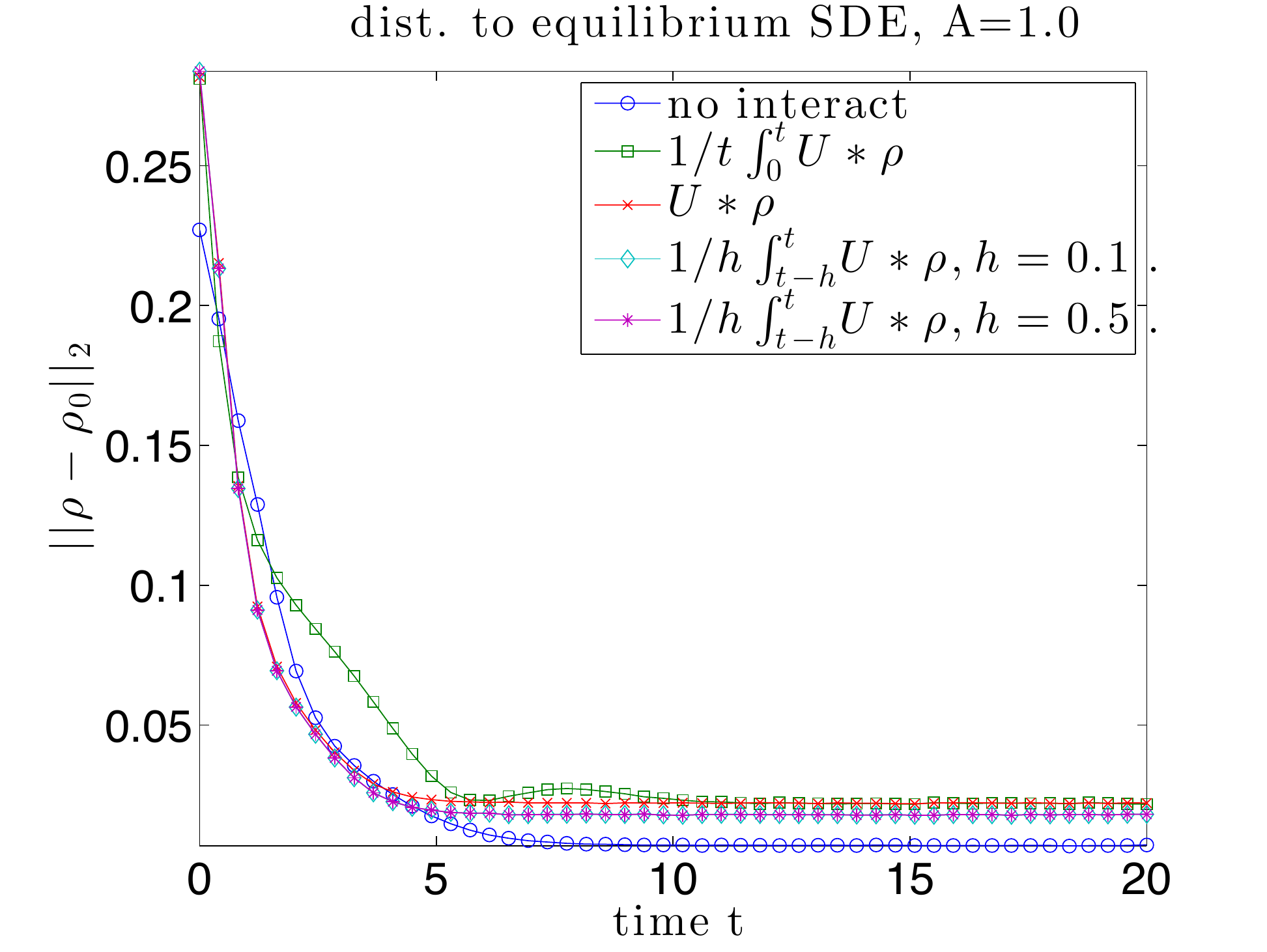}
\caption{In blue: decay without interaction; in green:  decay for retarded interaction term without cut-off, in red: decay for retarded interaction term with different $H$ and $A=1.0$.}
\label{figfleece1}
\end{figure}

The figures provide a strong indication that all cases with interaction converge to the same stationary distribution. We also observe the exponential decay to equilibrium for the case without interaction (cf.~\cite{DKMS12} for a theoretical exposition). Furthermore, the graph in green shows the case with a strongly retarded potential. The decay is no longer strictly monotone and it takes more time to achieve the stationary state, in comparison to the cases without significant delay.

\begin{figure}[h]
\includegraphics[clip,width=0.5\textwidth]{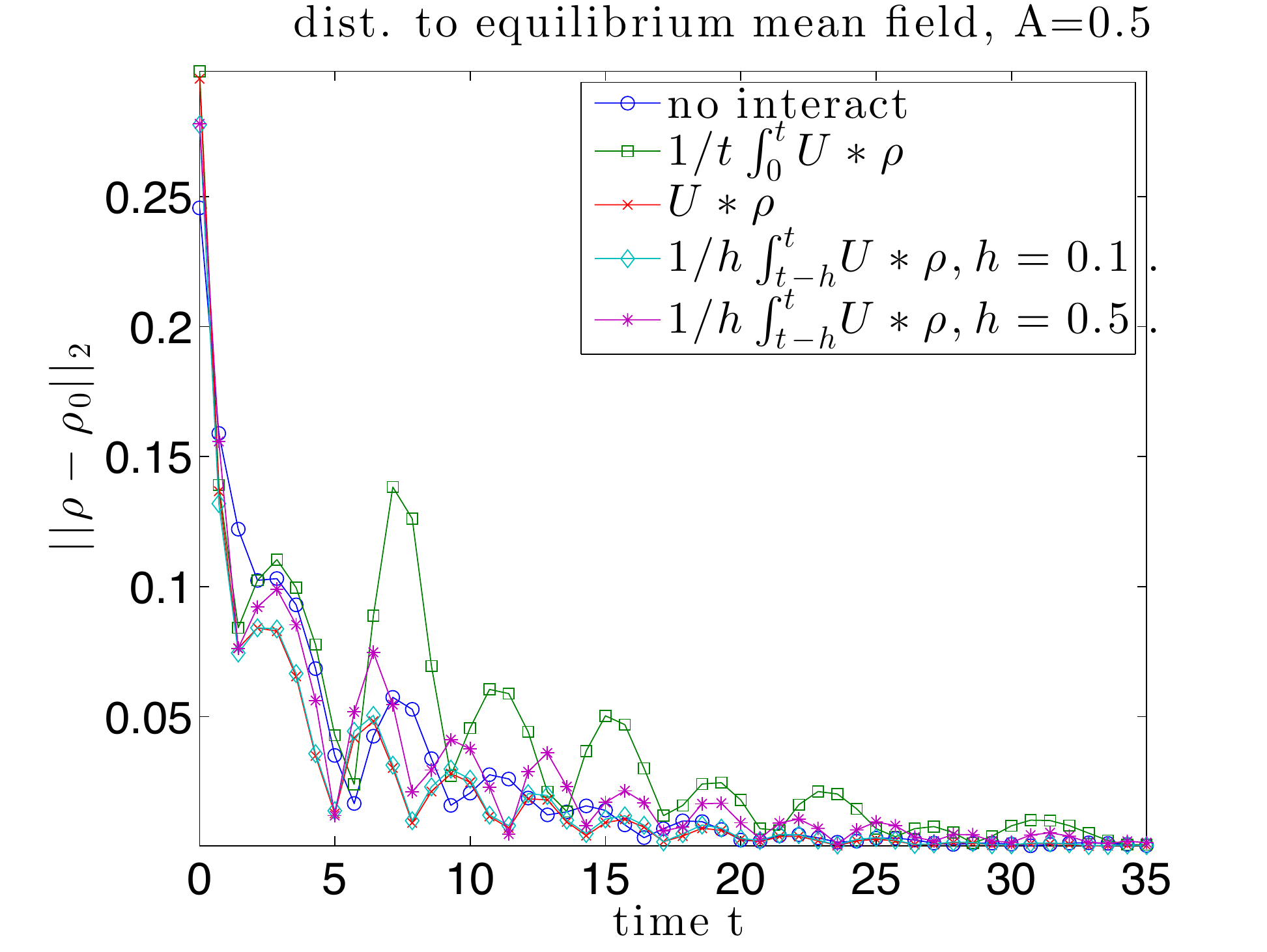}
\includegraphics[clip,width=0.5\textwidth]{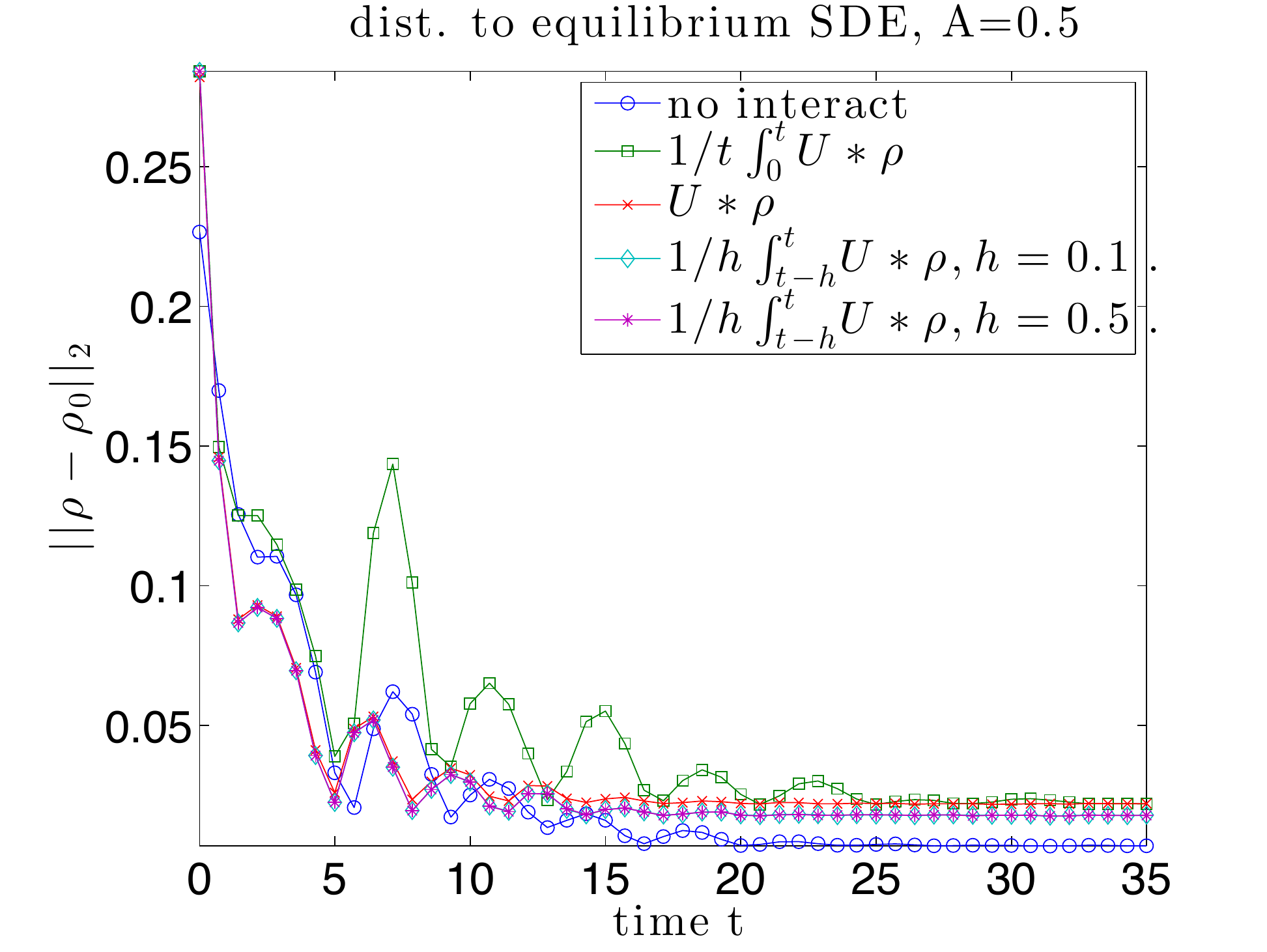}
\caption{In blue: decay without interaction; in green:  decay for retarded interaction term without cut-off, in red: decay for retarded interaction term with different $H$ and $A=0.5$.}
\label{figfleece2}
\end{figure}

Figure \ref{A} displays a comparison between the decay to equilibrium for different values of $A$. As can be observed, the decay becomes slower and, in particular, less and less regular for smaller values of $A$.

\begin{figure}[h]
\centering
\includegraphics[clip,width=0.6\textwidth]{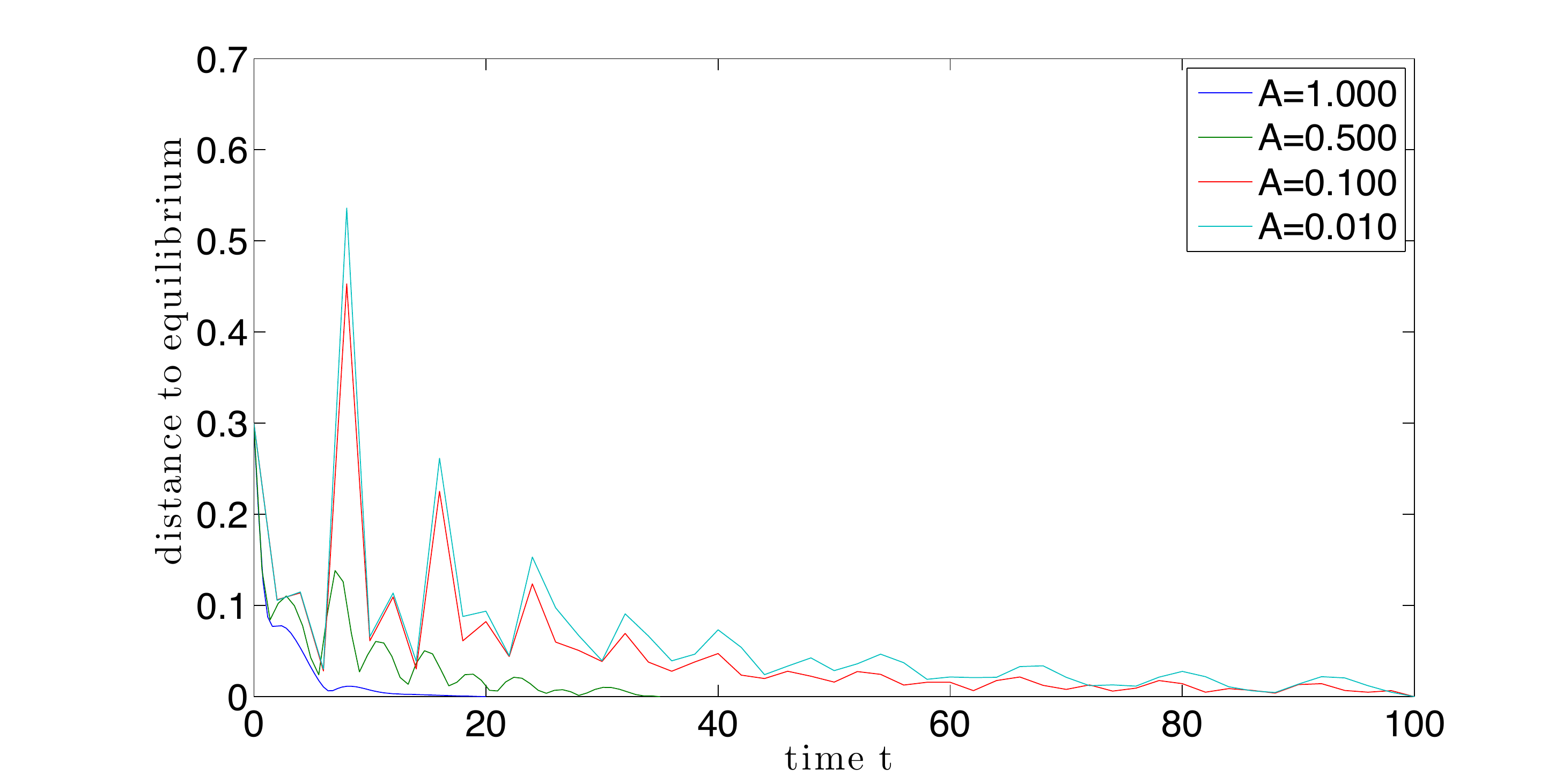}
\caption{Comparison of decay to equilibrium for different values of $A$ with $H=\infty$.}
\label{A}
\end{figure}


\subsubsection{Sensitivity of stationary solutions with respect to parameters in the interaction potential}

Finally, we investigate  the behavior of the stationary solution for different  parameters in the interaction model. In particular, we expect a wider profile for larger radii, as well as for stronger interaction, i.e., $C$ large. To numerically analyze the behavior for the different parameters, we consider the stationary equation in fixed-point form (\ref{eq:fixedpoint}). We are interested in the change of the stationary solution for varying parameters $R$ and $C$ respectively. 

In Figure~\ref{fig3fleece}, the  stationary solution is shown. First, we consider the case of varying radius $R$. We use the potential $U$ from (\ref{Uinteraction3}) and keep $C$ and $k$ fixed. As we can see in Figure~\ref{fig3fleece}, the solution $\rho$ is becoming less concentrated  for increasing radius $R$.
Second we vary  $C$ keeping $R$ and $k$ fixed obtaining similiar results as before.

\begin{figure}
\includegraphics[clip,width=0.5\columnwidth]{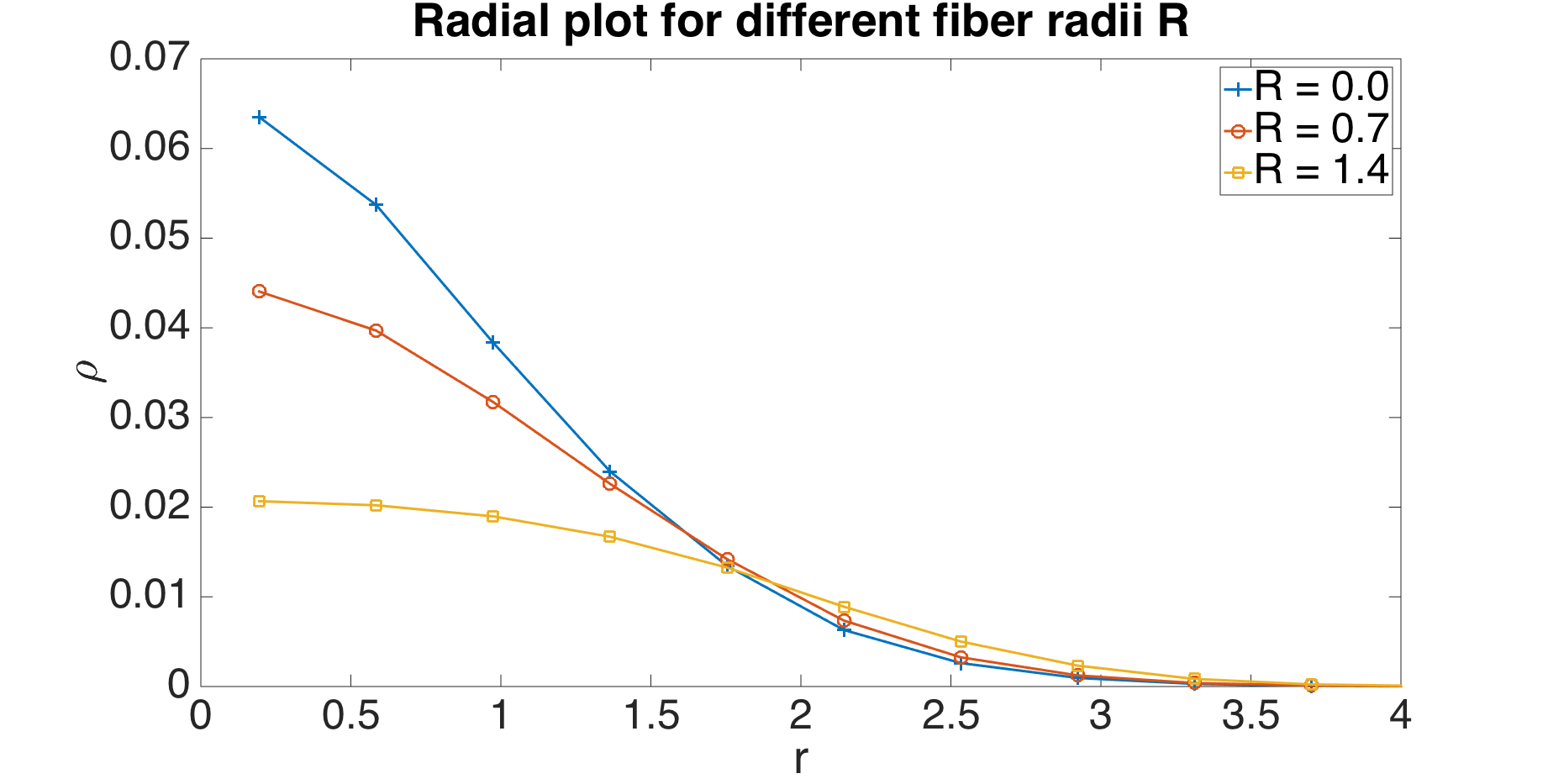}
\includegraphics[clip,width=0.5\columnwidth]{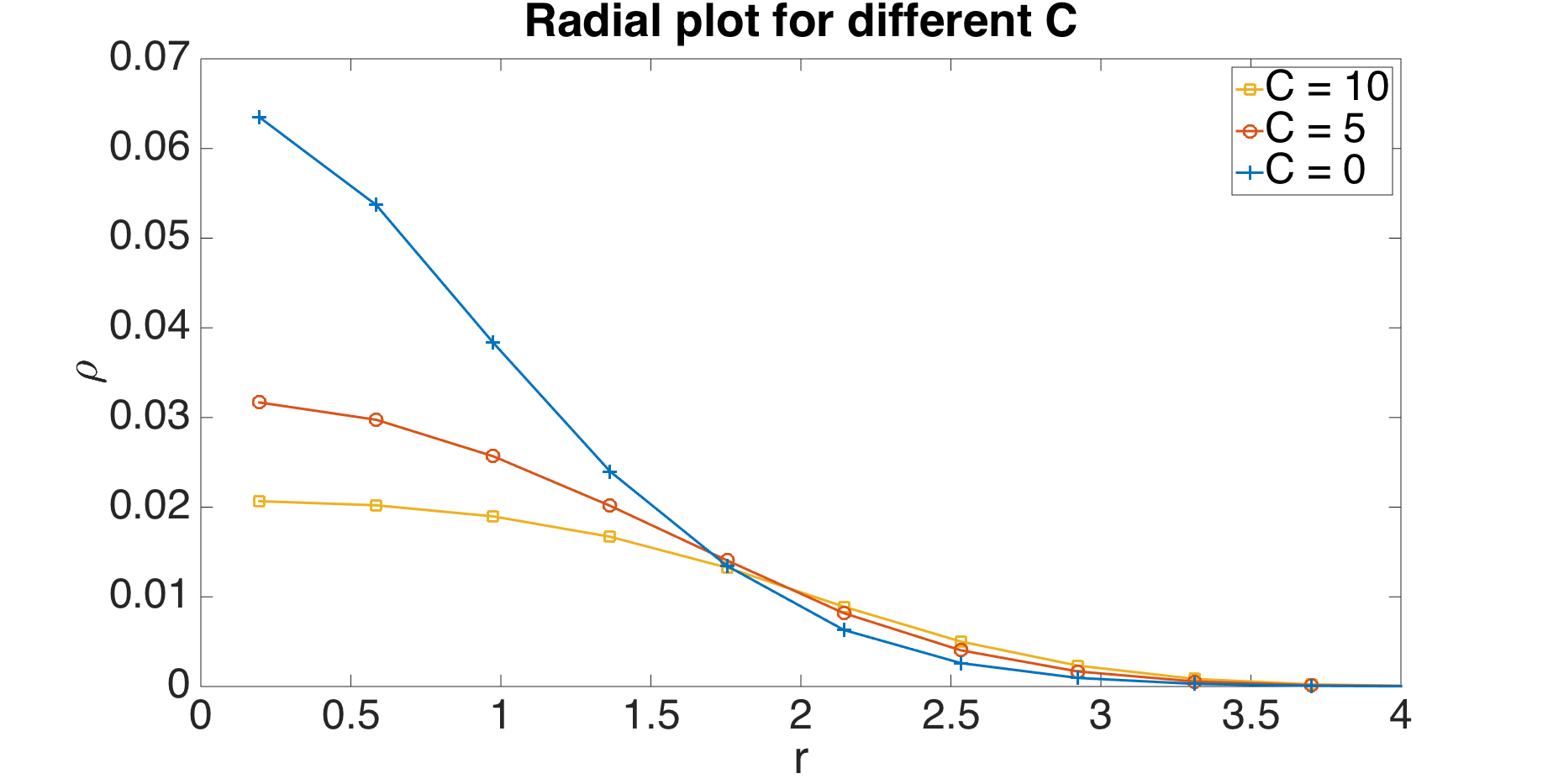}
\caption{Stationary solutions  $\rho$ for different values of the radius $R$  and different values of  the factor $C$ in the  interaction potential.}
\label{fig3fleece}
\end{figure}  

\section{Conclusion and outlook}\label{conclusion}
In this work we extended existing models for describing the lay-down process of fibers during the production of non-wovens to a model, which captures the interaction between the fibers by using a system of retarded stochastic differential equations and their corresponding mean-field approximations. A theoretical investigation of the mean-field limit for the deterministic retarded system is included. The solutions of the (retarded) microscopic and mean-field approximations are numerically compared with each other. In particular, the stationary solutions and  the decay to equilibrium is numerically investigated. Different types of interaction potentials are presented and the influence of different parameters in the interaction potential is investigated. Furthermore, a large diffusion scaling for the mean-field equations has been considered. 
Another interesting theoretical issue which has not been treated in the present work is the analytical investigation of the convergence of the retarded mean-field equation towards equilibrium. 

\appendix

\section{Proof of Proposition~\ref{prop:exists}}
Here we give a proof of Proposition \ref{prop:exists}, compare \cite{G12}. We begin by writing the differential equation in its integral form
 \begin{align}\label{eq:integral}
  Z_t(z) = z + \int_0^t a(Z_s(z))\,ds + \int_0^t \frac{1}{h(s)}\int_{s-h(s)}^s \K\mu_\sigma(Z_s(z))\,d\sigma\,ds.
 \end{align}
 Let $T>0$ be arbitrary but fixed and set $\B_T:=\C([0,T]\times\R^m;\R^m)$. We now denote $\mathcal{T}$ to be the operator on the right hand side, and show the existence of a unique fixed point for this operator. 
 
 To see that this operator is well-defined, let $Z\in \B_T$. The first to terms on the right hand side are obviously well-defined. Let us now consider the third term.
 By definition of the push forward measure, $\mu = Z_\#\mu_0 \in \C([0,T];\text{w--}\P_1(\R^m))$. Furthermore, since
 \[
  |\K\mu_t(z_1) - \K\mu_t(z_2)| \le \int_{\R^m} |B(z_1,\hat{z}) - B(z_2,\hat{z})|\,\mu_t(d\hat{z}) \le \Lip(B)|z_1-z_2|,
 \]
 for every $t\in[0,T]$, we infer that $\K\mu\in \C([0,T];\Lip_b(\R^m))\hookrightarrow \B_T$. A simple reformulation of the term under the integral leads to
 \begin{align*}
  \frac{1}{h(s)}\int_{s-h(s)}^s \K\mu_\sigma(Z_s(z))\,d\sigma &= \int_0^1 \K\mu_{\alpha(s,h(s),\sigma)}(Z_s(z))\,d\sigma \\
  &= \int_0^1 \int_{\R^m}B(Z_s(z),Z_{\alpha(s,h(s),\sigma)}(\hat z))\,\mu_0(d\hat z)\,d\sigma,
 \end{align*}
 with $\alpha(s,h(s),\sigma) = (1-\sigma)(s-h(s))+\sigma s$. From the assumptions, we conclude that this term is continuous in $s\in[0,T]$ and Lipschitz in $z\in\R^m$. Consequently, the third term is well-defined. Altogether, we have that $\mathcal{T}\colon \B_T\to \B_T$ as required.
 
 As a first estimate, we obtain
 \begin{align*}
  |(\T Z - \T\hat{Z})_t(z)| &\le \Lip(F)\int_0^t |Z_s(z)-\hat Z_s(z)|\,ds \\
   &+ \Lip(B)\int_0^t \frac{1}{h(s)}\int_{s-h(s)}^s \int_{\R^m} |Z_\sigma(\hat{z})-\hat{Z}_\sigma(\hat{z})|\, \mu_0(d\hat z)\,d\sigma\,ds,
 \end{align*}
 from the Lipschitz continuity of the mappings on the right hand side. Denoting 
\[
 \tilde\E(t) = \sup_{z\in\R^m} |(\T Z - \T\hat{Z})_t(z)|\quad\text{and}\quad\E(t):=\sup_{z\in\R^m}|(Z-\hat Z)_t(z)|,
\] 
we obtain for the cases $H\in(0,\infty)$, $t\le H$ and $h(t)=t$, the following estimate:
\begin{align*}
 \tilde\E(t) &\le \Lip(F)\int_0^t \E(s)\,ds + \Lip(B)\int_0^t \frac{1}{s}\int_{0}^s \E(\sigma) \,d\sigma\,ds \\
 &= \Lip(F)\int_0^t \E(s)\,ds + \Lip(B)\int_0^t  (\ln(t)-\ln(s))\, \E(s)\,ds \\
 &\le \Lip(F)\int_0^t  (1+\ln(t)-\ln(s))\, \E(s)\,ds,
\end{align*}
where we used integration by parts in the equality.
With this estimate at hand, we construct a solution by the Picard iteration procedure, and show that the generated sequence converges. We, therefore, define the sequence $(Z^{(k)})_{k\ge 0}\subset\B_T$ recursively by $Z^{(k+1)}=\T Z^{(k)}$ for $k\ge 0$, and denote
\[
 \E_k(t) = \sup_{z\in\R^m}|(Z^{(k+1)}-Z^{(k)})_t(z)|,\quad g(t,s)=1+\ln(t)-\ln(s).
\]
Applying the above estimate iteratively leads to
\begin{align*}
 \E_k(t) &\le \Lip(F)\int_0^t g(t,t_1)\,\E_{k-1}(t_1)\,dt_1 \\
 &\le \Lip(F)^2\int_0^t g(t,t_1)\int_0^{t_1} g(t_1,t_2)\,\E_{k-2}(t_2)\,dt_2\,\,dt_1 \\
 &\cdots\\
 &\le \Lip(F)^k\left[\int_0^t g(t,t_1)\.s \int_0^{t_{k-1}}g(t_{k-1},t_k) \,dt_k\.s dt_1\right] \sup_{0\le t_k\le t}\E_0(t_k).
\end{align*}
Elementary computations of the term in the bracket gives 
\[
 \E_k(t) \le \Lip(F)^k\frac{(k+1)t^k}{k!}\sup_{0\le t_k\le t}\E_0(t_k),
\]
for any $k\ge 0$. Since the series
\[
 \sum_{k\ge 0} c^k\frac{(k+1)t^k}{k!} = (1+ ct)e^{ct} <+\infty,
\]
summing up the terms $\E_k(t)$ in $k\ge 0$ yields
\[
 \sum_{k\ge 0} \E_k(t) \le \sup_{0\le t_k\le t}\E_0(t_k) (1+ \Lip(F)t)e^{\Lip(F)t} <+\infty,
\]
and hence $\E_k(t)\to 0$ pointwise in $t>0$. Consequently, the Picard sequence $(Z^{(k)})_{k\ge 0}$ converges uniformly in $\B_T$ to the solution of the integral equation (\ref{eq:integral}). Since $T$ was chosen arbitrarily, the solution may be extended to $Z\in\C(\R_+\times\R^m;\R^m)$.

As for uniqueness, we take two solutions $Z$ and $\hat Z$ of the integral equation, and denote its difference by
\[
 \E(t) = \sup_{z\in\R^m} |(Z-\hat Z)_t(z)|.
\]
From the estimates above, we have
\[
 \E(t) \le \Lip(F)\int_0^t  g(t,s)\, \E(s)\,ds.
\]
Let $T>0$ be arbitrary but fixed. Following the arguments from above, we obtain
\[
 \left(1 - \Lip(F)^k\frac{(k+1)t^k}{k!} \right)\sup_{0\le t\le T}\E(t) \le 0.
\]
Passing to the limit $k\to\infty$ yields $\E\equiv 0$ on $[0,T]$, and hence $Z=\hat Z$ on $[0,T]\times\R^m$. Since $T$ was chosen arbitrarily, this uniqueness result extends to $\R_+\times\R^m$.

We now consider the case $H\in(0,\infty)$. As above, we establish a unique solution $Y\in\B_{H}$. Setting $Y_{H}(z)=y$ as initial value, the integral form for $t\ge H$ reads
\[
 Z_t(y) = y + \int_{H}^t a(Z_s(y))\,ds +  \int_{H}^t \frac{1}{h(s)}\int_{s-h(s)}^s \K\mu_\sigma(Z_s(y))\,d\sigma\,ds.
\]
With the same notations as above, we obtain the following estimate
 \begin{align*}
  \tilde\E(t) &\le \Lip(F)\int_{H}^t \E(s)\,ds + \Lip(B)\int_{H}^t \frac{1}{h(s)}\int_{s-h(s)}^s \E(\sigma) \,d\sigma\,ds.
 \end{align*}
 The second term on the right hand side may be expressed in the form
 \begin{align*}
  \int_{H}^t \frac{1}{h(s)}\int_{s-h(s)}^s \E(\sigma) \,d\sigma\,ds &= \frac{t}{H}\int_{t-H}^t \E(s)\,ds - \int_0^{H}\E(s)\,ds \\
  &\hspace*{6em} -\ \frac{1}{H}\int_{H}^t s(\E(s)-\E(s-H))\,ds \\
  &=\frac{t}{H}\int_{t-H}^t \E(s)\,ds - \int_0^{H}\E(s)\,ds \\
  &\hspace*{4em}-\frac{1}{H}\int_{H}^ts\E(s)\,ds + \frac{1}{H}\int_0^{t-H}(s+H)\E(s)\,ds \\
  & \le \frac{t}{H}\int_0^t \E(s)\,ds,
 \end{align*}
 where we used integration by parts in the first equality, and the fact that
 \[
  \int_{H}^t (H-s)\E(s)\,ds \le 0,\quad \int_0^{t-H}(s-t)\E(s)\,ds \le 0,\quad \int_0^{t-H}\E(s)\,ds \le \int_0^t \E(s)\,ds,
 \]
 in the last inequality. Consequently, there is a constant $c=c(F,H)>0$, such that
 \[
  \tilde\E(t) \le c\,(1+t)\int_0^t\E(s)\,ds.
 \]
 Constructing a sequence $(Z^{(k)})_{k\ge 0}\in\B_T$ via the Picard iteration for some arbitrary but fixed $T>H$, with $Z^{(k)}_t\equiv Y_t$ for $t\in[0,H]$, we obtain as above
 \begin{align}\label{prop:exists:eq:2}
  \E_k(t)\le (1+t)c^k\left(\frac{t^k}{k!} + 2\frac{t^{k+1}}{(k+1)!}\right)\sup_{0\le t_k\le t}\E_0(t_k).
 \end{align}
 Notice that $\E_k\equiv 0$ on $[0,H]$. Replicating the arguments above yields existence and uniqueness of a solution $Z\in \B_T$. Moreover, we may extend the solution to $\R_+\times\R^m$.
 
 To conclude, we observe that 
 \begin{align*}
  \int_0^1 \int_{\R^m}B(Z^{(k)}_s(z),Z^{(k)}_{\alpha(s,h(s),\sigma)}(\hat z))\,\mu_0(d\hat z)\,d\sigma &\\
  &\hspace*{-6em}\longrightarrow\quad \int_0^1 \int_{\R^m}B(Z_s(z),Z_{\alpha(s,h(s),\sigma)}(\hat z))\,\mu_0(d\hat z)\,d\sigma,
 \end{align*}
 for $k\to \infty$ by the Lebesgue dominated convergence, and so 
 \[
  s\longmapsto \frac{1}{h(s)}\int_{s-h(s)}^s \K\mu_\sigma(Z_s(z))\,d\sigma
 \]
 is continuous for all $z\in\R^m$. Consequently, $t\mapsto Z_t(z)$, $t>0$, is continuously differentiable for all $z\in\R^m$, and satisfies the differential form (\ref{eq:meanfieldode}). \hfill\endproof

\end{document}